\documentclass[psamsfonts]{amsart}
\usepackage{amssymb,amsfonts,lacromay}
\usepackage[all,arc]{xy}
\usepackage{enumerate}
\usepackage{mathrsfs}
\usepackage{stmaryrd}
\usepackage{stackrel,tikz}


\theoremstyle{thm}
\newtheorem{thm}{Theorem}[section]

\newtheorem{prop}[thm]{Proposition}
\newtheorem{lem}[thm]{Lemma}

\theoremstyle{definition}
\newtheorem{defn}[thm]{Definition}

\newtheorem{ouch0}[thm]{Counterexample}
\newtheorem{exmp}[thm]{Example}

\newtheorem{asses}[thm]{Assumptions}
\newtheorem{rem}[thm]{Remark}

\newtheorem{warn}[thm]{Warning}

\makeatletter
\let\c@equation\c@thm
\makeatother
\numberwithin{equation}{section}

\makeatletter
\ifx\SK@label\undefined\let\SK@label\label\fi
 \let\your@thm\@thm
 \def\@thm#1#2#3{\gdef\currthmtype{#3}\your@thm{#1}{#2}{#3}}
 \def\mylabel#1{{\let\your@currentlabel\@currentlabel\def\@currentlabel
  {\currthmtype~\your@currentlabel}
 \SK@label{#1@}}\label{#1}}
 \def\myref#1{\ref{#1@}}
\makeatother

\newcommand{\Co}{{\bf{Co}\sV}}

\renewcommand{\t}[1]{\text{#1}}
\newcommand{\til}[1]{\widetilde{#1}}

\bibliographystyle{plain}

\title{Enriched model categories in equivariant contexts}

\author{Bertrand Guillou}
\email{bertguillou@uky.edu}
\address{Department of Mathematics,
University of Kentucky, 
Lexington, KY 40506}

\author{J.P. May}
\email{may@math.uchicago.edu}
\address{Department of Mathematics,
The University of Chicago,
Chicago, IL 60637}

\author{Jonathan Rubin}
\email{jrubin@math.uchicago.edu}
\address{Department of Mathematics,
The University of Chicago,
Chicago, IL 60637}

\date{}

\begin{document}

\begin{abstract}  We give a general framework of equivariant model  category theory.
Our groups $G$, called Hopf groups, are suitably defined group objects in any well-behaved symmetric monoidal 
category $\sV$.  For any $\sV$, a discrete group $G$ gives a
Hopf group, denoted  $I [G]$.  When $\sV$ is cartesian monoidal, the Hopf groups
are just the group objects in $\sV$.  When $\sV$ is the category of modules over a commutative
ring $R$, $I[G]$ is the group ring $R[G]$ and the general Hopf groups are the cocommutative 
Hopf algebras over $R$.  We show how all of the usual constructs of equivariant homotopy
theory, both categorical and model theoretic,  generalize to Hopf groups for any $\sV$.
This opens up some quite elementary unexplored mathematical territory, while systematizing
more familiar terrain.  
\end{abstract}

\maketitle

\tableofcontents

The model theoretic role of presheaf categories in enriched contexts is studied in general in \cite{GM}.  
This paper explores how that general theory plays out in equivariant contexts. We begin with 
an exposition of uniform categorical foundations and then turn to the homotopy theory. 
The context is more general and perhaps less familiar than expected.  It opens up some quite 
natural unexplored territory.

As a preamble, we start in \S\ref{GTop} with a well understood motivating example, the equivalence of model 
categories of $G$-spaces 
with presheaves of spaces defined on
the orbit category of $G$.  We view that as a template for generalization. In \S\ref{Hopf}, we ignore model categories and 
develop a coherent categorical context.  Briefly, following \cite{RW} and other precursors, we identify Hopf groups $G$ as the appropriate 
groups in a well-behaved symmetric monoidal category $\sV = (\sV, \otimes, I )$.  For a well-behaved
category $\sM$ enriched in $\sV$ and a Hopf group $G$ in $\sV$, we show how to develop a theory of 
$G$-objects in $\sM$, including among other desiderata the fixed point and orbit objects associated 
to $G$-objects. These lie  in $\sM$, and then the most obvious presheaves in sight take values in $\sM$ 
rather than in the enriching  category $\sV$, which is where it is usually most useful to have them.   We shall 
see that if the $G$-objects in $\sM$ have a model as  presheaves with values in $\sM$ and the objects of $\sM$ are modeled as presheaves in $\sV$,
then the $G$-objects in $\sM$ also have a model as 
presheaves with values in $\sV$.

In any cartesian monoidal category $\sV$, every object is a 
cocommutative comonoid in a unique way via the diagonal $\DE$. Therefore we may define group objects in $\sV$ exactly as we do in $\mathbf{Set}$.

Now let $\sV = (\sV,\otimes, I )$ be a cosmos, or a good enriching category. As we recall in \S\ref{Hopf},
the category $\Co$ of cocommutative comonoids in $\sV$ is cartesian monoidal. If $C$ and $D$ are in $\Co$, then so is  $C\otimes D$, and it is the categorical product of  $C$ and $D$ in $\Co$; with its evident {comultiplication} and counit, the unit object $I$ of $\sV$ is the unit object in $\Co$. We view groups in the symmetric monoidal category $(\Co,\otimes, I )$  to be the most natural generalization of groups in cartesian monoidal categories,
and we introduce the term Hopf group as the generic name for $\Co$-groups. We shall be more explicit in
\S\ref{Hgrp}.
If $\sV$ is cartesian monoidal, then $\sV = \Co$ and Hopf groups are {group objects in $\sV$, which we call} 
$\sV$-groups.  In general, we replace
$\sV$ by the cartesian monoidal category $\Co$ to define Hopf groups. When $\sV = R$-$\bf{Mod}$ for a commutative ring $R$, these are the cocommutative Hopf algebras over $R$.  We can think of  monoids in $\Co$ as cocommutative $\sV$-bialgebras and Hopf groups in $\sV$ as 
cocommutative $\sV$-Hopf algebras. 

Up to language, this much should be reasonably standard.   What is not standard is to see how thoroughly the usual constructions for $G$-objects in a category, 
where $G$ is a discrete group, generalize to give analogous constructions for $G$-objects in a category $\sM$
enriched in $\sV$, where $G$ is a Hopf group in $\sV$.   
This suggests a generalization of equivariant homotopy theory to the context of $G$-objects in $\sM$ for any Hopf group $G$ and $\sV$-category $\sM$.  The idea is that perspectives natural in equivariant homotopy theory apply just as well to actions of Hopf groups in general.  We have in mind new directions in homological algebra and 
stable homotopy theory, 
starting with the category {$\sV = R\t{-}\mathbf{Mod}$} of $R$-chain complexes 
and a  cocommutative Hopf algebra $A$ over $R$ in the former case.  For a very particular example, with $R = \bF_p$ we can take $A$ to be the mod 
$p$ Steenrod algebra.

This gives  a broad generalization of classical group actions. 
In general, for a Hopf group $G$, let $G\sV$ be the category 
of $G$-objects in $\sV$ 
and $G$-maps between them.  In the original version of this paper, by the  first two authors, we treated several equivariant 
contexts as different.  The present more categorical vantage point, promulgated by the third author, gives a common generalization.  

One detail from the original paper argues persuasively for the change of perspective.  
For any cosmos $\sV$, there is an evident strong symmetric monoidal functor 
$I[-]$ from sets to $\sV$, given by $I[S] = \amalg_S I$.
Applied to a discrete group $G$, it gives a Hopf group $I[G]$. 
Starting from a discrete group $G$ 
and its $\sV$-group ring $I[G]$, the homotopically correct version of the fixed point $\sV$-orbit category of $I[G]$  is the full 
$\sV$-subcategory of $G\sV$  with objects the  $I[G]/H$, where $H$ runs through the closed subgroups of $I[G]$ as defined
in \S\ref{closed} below; see \myref{GObjAsPrshvs}.
We could instead restrict to the full $\sV$-subcategory of $G\sV$ with objects of
the form $I[G/H]$, where $H$ is an ordinary subgroup of $G$.  A third choice is to restrict to the non-full $\sV$-subcategory with objects 
the $I[G/H]$ and morphism objects $I[G{\bf{Set}}(G/K, G/H)]$, which is what we might think of first.  Here the set $G{\bf{Set}}(G/K, G/H)$ of $G$-maps $G/K \rtarr G/H$ can be 
identified with $(G/H)^K$.  The difference between the latter two choices is seen in the inclusion
$$  I[(G/H)^K] \subset I[G/H]^K. $$
This is the identity in the usual cartesian monoidal examples $\sV$, but in general it is not.  For example, it is not
when $\sV = R$-$\bf{Mod}$, in which case $I = R$ and $R[G]$ is the usual group ring.  In this example,
the (closed) subgroups of $R[G]$ are the sub Hopf algebras over $R$, not just those of the form 
$R[H]$ for a subgroup $H$ of $G$.  The present perspective works for Hopf groups in any cosmos $\sV$ with no more difficulty 
than the original perspective on $\sV$-groups in cartesian monoidal categories $\sV$. 

While this idea is elementary and natural, it suggests new and as yet unexplored perspectives.  In particular, in equivariant homotopy theory, the most natural weak equivalences are certainly not the mere underlying weak equivalences, which are altogether too naive.\footnote{These are sometimes called Borel equivalences, since Borel homology, in contrast to Bredon homology,  is invariant under such naive equivalences.}  However, in homological algebra for DG-modules over Hopf algebras or, more generally, 
over DG-Hopf algebras, the weak equivalences used to date are the quasi-isomorphisms, which are the mere underlying weak equivalences.  

With the categorical perspective of \S\ref{Hopf} on hand, we turn to equivariant model categories in \S\ref{equiv2}. 
There we fix a set $\sF$ of (closed) subgroups of a Hopf group $G$ in $\sV$ and let $\sO_{\sF}$ be the full $\sV$-subcategory of $G\sV$ whose objects are the orbits $G/H$ in $G\sV$.   We assume that $\sV$ and $\sM$ have suitably related model structures.  We then have the evident analogues in $G\sV$ and $G\sM$ of the $\sF$-equivalences in {$G$-spaces}.  We show in Theorems \ref{GM}, \ref{GPM}, and \ref{GObjAsPrshvs} that under quite general conditions the given model structure on $\sM$ induces Quillen equivalent $\sF$-model structures on $G\sM$ and on the functor (or presheaf) category $\mathbf{Fun} (\sO_{\sF}^{op},\sM)$, in precise analogy with the comparison of model categories of $G$-spaces dealt with in \S\ref{GTop}.  

To model $G\sM$ by a category of presheaves with values in $\sV$ rather than $\sM$, 
we assume in \S\ref{GVModel} that $\sM$ itself is Quillen equivalent to a presheaf category\footnote{As in \cite[1.5]{GM},
we shall use $\mathbf{Fun}$ for enriched functor categories (functors being covariant) and reserve $\mathbf{Pre}$ for enriched categories 
of presheaves with values in $\sV$ (presheaves being contravariant).} 
 $\mathbf{Pre}(\sD,\sV)$ 
and that the results of \S\ref{GMModel} apply to $\sM$.  We show in \myref{neat} that $G\sM$ is then also Quillen equivalent to a presheaf model category  
$\mathbf{Pre} (\sO_{\sF}\otimes\sD, \sV)$.  In contrast to earlier results, the domain category $\sO_{\sF}\otimes\sD$ for the presheaves in $\sV$ needed to model $G\sM$ is generally {\em not} a full subcategory of $G\sV$.

We show how the general theory specializes to DG Hopf algebras in \S\ref{Hopfalg}.   
Letting $A$ denote a cocommutative DG hopf algebra over a commutative ring $R$,
we show that the results of \S\ref{equiv2} apply to give a Quillen equivalence between $A\text{-}\mathbf{Mod}$ with its $\sF$-model structure and the presheaf category $\mathbf{Pre} (\sO_{\sF}, R\text{-}\mathbf{Mod})$, where $\sO_{\sF}$ is the category of orbits, denoted $A/\!/B$, for $B$ in some chosen set $\sF$ of sub Hopf-algebras of $A$.  The theory here seems especially intriguing, and we hope to see it followed up in later work.

We take $\sV$ to be the category of simplicial sets in \S\ref{secsset}.  Here Hopf groups are simplicial groups $G$, but we can say little in that generality.  Our theory necessarily must focus on commutation relations between orbits and fixed points. This only works well simplicially when $G$ is a discrete group viewed as a constant simplicial set.  For such a $G$, taking $\sV = \sM = s\mathbf{Set}$, the results of \S\ref{equiv2} apply to give the well-known Quillen equivalence between $G s\mathbf{Set}$ with its $\sF$-model structure and the presheaf category $\mathbf{Pre} (\sO_{\sF}, s\mathbf{Set}) $, where $\sO_{\sF}$ is the category of orbits $G/H$ 
for $H\in \sF$.
As we explain in \S\ref{Mouch}, to generalize $\sM$ to a suitable category enriched in $s\mathbf{Set}$ is more problematic.  Restricting to a finite group $G$, we obtain a general theorem under appropriate 
assumptions on $\sM$ that ensure the required commutation of orbits and fixed points.

The reader will likely notice that other examples in algebraic geometry and category theory will work in much the same way as in the two quite different contexts just described.

We give several categorical elaborations in \S\ref{Catcat}.  We give a general definition of a ``closed'' subgroup of a Hopf group in \S\ref{closed}, we explain when $\sF$ deserves to be called a ``family'' of subgroups in \S\ref{families}, we elaborate on the double enrichment present in equivariant contexts in \S\ref{double}, and we give alternative perspectives on the basic adjunction $(\bT,\bU)$ relating functor categories to categories of $G$-objects in \S\ref{TU}.

We thank Emily Riehl for catching errors and for many helpful comments, and we thank a helpful referee 
for wading through a less satisfactory earlier draft.
This  work  was  partially  supported  by Simons Collaboration Grant No. 282316 held by the first author.

\section{Preamble: enriched model categories of $G$-spaces}\label{GTop}

We describe the motivating example \cite{DK, EHCT, Pia} for a general 
theory relating equivariant categories to presheaf categories.
Since the example is specified topologically, we use topological enrichments.
We work in $\sU$, the category of compactly generated weak Hausdorff spaces.

Let $G$ be a topological group. We understand subgroups of $G$ to be closed.
Let $\sF$ be a nonempty family of subgroups of $G$, so that $\{e\}\in \sF$ and subconjugates of
groups in $\sF$ are in $\sF$. 

\begin{rem} While it is standard to restrict attention to families, for the theory here
$\sF$ could be any (nonempty) set of subgroups of $G$.
\end{rem}

Specializing the general theory in \cite{GM}, 
take $\sV$ there to be the cartesian monoidal category $\sU$ 
with its standard Quillen model structure. The generating cofibrations $\mathcal{I}$ are
the cells $S^{n-1}\rtarr D^{n}$ and the generating acyclic cofibrations 
$\mathcal{J}$ are the inclusions $i_0\colon D^n\rtarr D^n\times I$, where $n\geq 0$.  

To be pedantically precise, we let $\ul{G\sU}(X,Y)$ denote the space of $G$-maps 
$X\rtarr Y$ and let $G\sU(X,Y)$ denote its underlying set.   The spaces $\ul{G\sU}(X,Y)$
are the hom objects that give $G\sU$ its enrichment in $\sU$.  

We may view $G\sU$ as a closed cartesian monoidal category, with $G$ acting diagonally 
on cartesian products. Its objects are the $G$-spaces. For $G$-spaces $X$ and $Y$, the 
internal hom $G$-space $\ul{\sU}_G(X,Y)$ is the $G$-space of all maps $X\rtarr Y$, with 
$G$ acting by conjugation. We emphasize that although $\ul{\sU}_G(X,Y)$ is defined using all maps $X\rtarr Y$, with our
understanding that it is a functor taking values in $G$-spaces it is
only functorial with respect to $G$-maps $X\rtarr X'$ and $Y\rtarr Y'$.   These $G$-spaces give the internal hom 
for the closed structure on $G\sU$. We let
$\sU_G(X,Y)$ denote the underlying set of $\ul{\sU}_G(X,Y)$, but  we emphasize that 
this notion of underlying set is of no great mathematical interest and will not generalize to 
our new context.  

Observe that for any $G$-space $X$, the space $X^G$ can be identified 
with the space $\ul{G\sU}(\ast,X)$.  We have identifications
{\begin{equation}\label{Gfixed}
 \ul{G\sU}(X,Y) =  \ul{\sU}_G(X,Y)^G
\end{equation}
and therefore 
\begin{equation}\label{enrichset}  
{G\sU}(X,Y) = {\sU}(\ast, \ul{\sU}_G(X,Y)^G). 
\end{equation}
That much will generalize to arbitrary $\sV$.  Here we also have 
\begin{equation}\label{enrichsettoo}
G\sU(X,Y) = \sU_G(X,Y)^G,
\end{equation}}
but that will only generalize under hypotheses on $\sV$ that are not usually satisfied.

It is usual to regard $G\sU(X,Y)$ as the space rather than just the set of $G$-maps $X\rtarr Y$,
and to write $\sU_G(X,Y)$ for the $G$-space rather than just the $G$-set of maps
$X\rtarr Y$. When we enrich a category in spaces or $G$-spaces, as here, it seems reasonable 
to use the same notation for sets and spaces of maps, since the latter are just given by  prescribing
a topology on the given sets.  However, the notational distinction is vital in the 
general context we are headed towards. Formally, (\ref{Gfixed}) and (\ref{enrichset}) describe a double 
enrichment of the underlying category $G\sU$: it is enriched in spaces, but as a closed symmetric monoidal
category it is also enriched over itself, that is, it is enriched in $G$-spaces.
We shall be categorically precise when we generalize.

With the notations of the general theory in \cite{GM}, we take $\sD$ to be $\sO_{\sF}$,
the full $\sU$-subcategory of $G\sU$ 
whose objects are the orbit $G$-spaces $G/H$ with $H\in \sF$. 
The most important example is $\sF=\sA\ell\ell$, the set of all
subgroups of $G$, and we write $\sO_G$ for this orbit category.
We have the adjunction
\begin{equation}\label{orbadj}
 \xymatrix@1{ \mathbf{Pre} (\sO_{\sF},\sU)\ar@<.4ex>[r]^-\bT & G\sU. \ar@<.4ex>[l]^-\bU}\\ 
\end{equation}
For a $G$-space $Y$, $G\sU(G/H,Y)$ can be identified with the fixed 
point space $Y^H$, so that $\bU(Y)$ is the presheaf of fixed point
spaces $Y^H$ for $H\in \sF$. 
As is easily checked, the left adjoint 
$\bT$ takes a presheaf $X$ to the $G$-space $X(G/e)$.  Therefore 
it takes the represented presheaf $\bY(G/H)$ {(see \S\ref{TU})} to the $G$-space $G/H$. 
Clearly $\epz\colon \bT\bU\rtarr Id$ is a natural isomorphism, hence $\bU$ is full and faithful.

Define the $\sF$-equivalences in $G\sU$ to be the $G$-maps $f$ 
such that the fixed point map $f^H$ is a weak equivalence for $H\in \sF$. 
These are the weak $G$-equivalences when $\sF = \sA\ell\ell$, and the $G$-Whitehead
theorem says that a weak $G$-equivalence between $G$-CW complexes is a $G$-homotopy 
equivalence. When $\sF = \{e\}$, a weak $\sF$-equivalence is just a $G$-map which 
is a nonequivariant weak equivalence, giving the naive or Borel version of equivariant homotopy theory.
Similarly, define the $\sF$-fibrations to be the $G$-maps $f$ such that $f^H$ is a (Serre) 
fibration for $H\in\sF$.  The following three theorems are proven in  \cite{MM, EHCT, Pia}. 
As we shall indicate, they are special cases of general results in \cite{GM}.  Compactly generated 
model categories are discussed in \cite{GM, MP}.

\begin{thm}\mylabel{ein} {The category} $G\sU$ is a compactly generated proper $\sU$-model category
with respect to the $\sF$-equivalences, $\sF$-fibrations, and the resulting cofibrations.
The sets of maps $\mathcal{I}_{\sF}=\{G/H\times i\}$ and $\mathcal{J}_{\sF}=\{G/H\times j\}$, where 
$H\in\sF$, $i\in \mathcal{I}$, and $j\in\mathcal{J}$, are generating sets of cofibrations and 
acyclic cofibrations.
\end{thm}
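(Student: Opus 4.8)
The plan is to verify the hypotheses of the recognition theorem for compactly generated model categories of \cite{GM, MP}, applied to the generating sets $\mathcal{I}_{\sF}$ and $\mathcal{J}_{\sF}$ with the $\sF$-equivalences as weak equivalences. Every step is powered by the adjunction
\[ G\sU(G/H\times A,Y)\cong \sU(A,Y^H), \]
obtained by combining the identification $G\sU(G/H,Y)=Y^H$ with the tensoring of $G\sU$ over $\sU$, so that $A$ carries the trivial $G$-action; it exhibits $G/H\times(-)\colon\sU\rtarr G\sU$ as left adjoint to the $H$-fixed point functor $(-)^H\colon G\sU\rtarr\sU$. Since the $\sF$-equivalences are precisely the maps sent to weak equivalences by every $(-)^H$ with $H\in\sF$, and weak equivalences in $\sU$ satisfy two out of three and are closed under retracts, the same holds for the $\sF$-equivalences.

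First I would establish the two lifting characterizations, which are purely formal. Transposing a lifting square for $G/H\times i$ across the adjunction shows that a $G$-map $f$ is $\mathcal{I}_{\sF}$-injective if and only if $f^H$ is $\mathcal{I}$-injective for every $H\in\sF$, that is, if and only if each $f^H$ is an acyclic Serre fibration; equivalently $f$ is simultaneously an $\sF$-fibration and an $\sF$-equivalence. The same transposition applied to $\mathcal{J}_{\sF}$ shows that $f$ is $\mathcal{J}_{\sF}$-injective if and only if each $f^H$ is a Serre fibration, that is, $f$ is an $\sF$-fibration. Intersecting, the $\mathcal{J}_{\sF}$-injective $\sF$-equivalences are exactly the $\sF$-acyclic $\sF$-fibrations, giving the identity required by the recognition theorem, namely that the $\mathcal{I}_{\sF}$-injectives coincide with the $\sF$-equivalences that are $\mathcal{J}_{\sF}$-injective. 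Compactness of the domains $G/H\times S^{n-1}$ and $G/H\times D^n$ with respect to the closed inclusions built from $\mathcal{I}_{\sF}$ reduces to compactness of $S^{n-1}$ and $D^n$ in $\sU$.

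The main obstacle is the acyclicity condition: every relative $\mathcal{J}_{\sF}$-cell complex is an $\sF$-equivalence. (Its containment in the $\mathcal{I}_{\sF}$-cofibrations is immediate, since each $j$ is an $\mathcal{I}$-cofibration in $\sU$ and hence $G/H\times j$ is an $\mathcal{I}_{\sF}$-cofibration.) The key point is that, although $(-)^H$ is only a right adjoint, it nonetheless commutes with the colimits that build relative cell complexes: pushouts along the closed inclusions $G/K\times j$ and transfinite composites of these. This is the standard topological fact that $H$-fixed points preserve pushouts one of whose legs is a closed $G$-cofibration, and preserve transfinite composites of closed inclusions, together with the identity $(G/K\times Z)^H=(G/K)^H\times Z$ for a trivial $G$-space $Z$. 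Granting this, applying $(-)^H$ to a relative $\mathcal{J}_{\sF}$-cell complex $X\rtarr Y$ yields a relative cell complex in $\sU$ assembled from the maps $(G/K)^H\times j$. Each of these is an acyclic cofibration in $\sU$, since $j\colon D^n\rtarr D^n\times I$ is one and $B\times(-)$ preserves acyclic cofibrations for every space $B$; as acyclic cofibrations in $\sU$ are closed under pushout, coproduct, and transfinite composition, $X^H\rtarr Y^H$ is an acyclic cofibration, in particular a weak equivalence. Hence $X\rtarr Y$ is an $\sF$-equivalence, the recognition theorem applies, and the cofibrations, fibrations, and weak equivalences are as claimed.

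It remains to record properness and the $\sU$-enrichment. Right properness holds because $(-)^H$ preserves pullbacks, so the pullback of an $\sF$-equivalence along an $\sF$-fibration becomes, after applying each $(-)^H$, a pullback of a weak equivalence along a Serre fibration in $\sU$, which is a weak equivalence by right properness of $\sU$. Left properness is dual: cofibrations in $G\sU$ are closed inclusions, $(-)^H$ preserves pushouts along them by the fact used above, and $\sU$ is left proper. Finally, the pushout-product axiom reduces to the nonequivariant one via the identity $A\times(G/H\times B)=G/H\times(A\times B)$: for $i\in\mathcal{I}$ and a generator $G/H\times k$ with $k\in\mathcal{I}\cup\mathcal{J}$, the pushout-product $i\,\square\,(G/H\times k)$ equals $G/H\times(i\,\square\,k)$, so the equivariant pushout-products of generators are the images under $G/H\times(-)$ of the nonequivariant ones. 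The required containments then follow from the pushout-product axiom in $\sU$ and the enrichment criterion of \cite{GM}.
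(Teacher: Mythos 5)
Your proposal is correct, and its overall architecture---the recognition theorem for compactly generated model categories, the lifting characterizations obtained by transposing across the adjunction $G\sU(G/H\times A,Y)\iso \sU(A,Y^H)$, and the isolation of the acyclicity condition as the one substantive point---is the same as the paper's (which otherwise defers to \cite{MM, EHCT, Pia}). The genuine difference is how acyclicity is discharged. You apply $(-)^H$ to a relative $\mathcal{J}_{\sF}$-cell complex and invoke the point-set fact that $H$-fixed points commute with pushouts along closed $G$-cofibrations and with sequential colimits of closed inclusions, so that $X^H\rtarr Y^H$ is itself a relative cell complex built from the acyclic cofibrations $(G/K)^H\times j$. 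The paper instead observes that each $j\in\mathcal{J}$ is the inclusion of a deformation retract, hence each $G/K\times j$ is the inclusion of a $G$-deformation retract, and that this property is preserved by coproducts, pushouts, and sequential colimits; a relative $\mathcal{J}_{\sF}$-cell complex is therefore the inclusion of a $G$-deformation retract, and applying $(-)^H$ to the $G$-homotopies gives a homotopy equivalence on fixed points with no colimit-commutation input needed at this step (the paper does use that commutation lemma, but only afterwards, to transport cell complexes along $\bU$ and to prove the Quillen equivalence of \myref{drei}). The paper's route yields the stronger conclusion that relative $\mathcal{J}_{\sF}$-cell complexes are $G$-homotopy equivalences; your route is the one that survives in enriched contexts where no interval or deformation-retract argument is available, which is why the paper reverts to essentially your argument in the general setting of \myref{GM}. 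Your added verifications of properness and the pushout-product axiom are sound and spell out what the paper leaves to the cited sources.
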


For $H\in \sF$, we have the functor $F_{G/H}\colon G\sU\rtarr \mathbf{Pre} (\sO_{\sF},\sU)$ from
\cite[1.5]{GM}; for a $G$-space $Y$, it is given by $(F_{G/H}Y)(G/K) = \sO_{\sF}(G/K,G/H)\times Y$. 
If $G$ acts trivially on $Y$, this is 
\[ (G/H) ^K\times Y\iso (G/H\times Y)^K = \bU(G/H\times Y)(G/K), \]
and then
\[ F_{G/H}Y \iso \bU(G/H\times Y).\] 
We apply this identification with $Y$ replaced by the maps in $\mathcal{I}$ and $\mathcal{J}$.

\begin{thm}\mylabel{zwei} {The category} $\mathbf{Pre} (\sO_{\sF},\sU)$ is a compactly generated proper $\sV$-model 
category with respect to the {level equivalences, level fibrations}, and the resulting cofibrations.
The sets of maps $F_{G/H}i$ and $F_{G/H}j$, where $H\in\sF$, $i\in \mathcal{I}$, and $j\in\mathcal{J}$,
generate the cofibrations and acyclic cofibrations, and these sets are isomorphic to
$\bU\mathcal{I}_{\sF}$ and $\bU \mathcal{J}_{\sF}$.
\end{thm}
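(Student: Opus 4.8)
The plan is to identify the claimed model structure as the projective (level) model structure on the enriched presheaf category $\mathbf{Pre}(\sO_{\sF},\sU)$ and to produce it by the standard recognition theorem for compactly generated model categories, exactly as in the general presheaf theory of \cite{GM}. The structure is to be created by the product of evaluation functors $\prod_{H}\mathrm{ev}_{G/H}\colon \mathbf{Pre}(\sO_{\sF},\sU)\rtarr \prod_{H\in\sF}\sU$, whose left adjoint sends a family $(A_H)$ to $\coprod_{H}F_{G/H}A_H$: a map of presheaves is declared a level $\sF$-equivalence or level $\sF$-fibration exactly when each $\mathrm{ev}_{G/H}$ carries it to a weak equivalence or Serre fibration in $\sU$, and the cofibrations are then forced. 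What must be checked is that the sets $\{F_{G/H}i\}$ and $\{F_{G/H}j\}$ satisfy the hypotheses of the recognition theorem.

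First I would verify that the small object argument applies. Colimits in $\mathbf{Pre}(\sO_{\sF},\sU)$ are computed levelwise, so each $\mathrm{ev}_{G/H}$ preserves filtered colimits; since $F_{G/H}$ preserves all colimits and the domains of the maps in $\mathcal{I}$ and $\mathcal{J}$ are compact in $\sU$, the domains of $F_{G/H}i$ and $F_{G/H}j$ are compact in $\mathbf{Pre}(\sO_{\sF},\sU)$. The essential point is then the acyclicity condition: every relative $\{F_{G/H}j\}$-cell complex must be a level $\sF$-equivalence. I would check this one level at a time. Evaluating at $G/K$ and using $(F_{G/H}A)(G/K)\iso A\times\sO_{\sF}(G/K,G/H)$, such a cell complex becomes a relative cell complex in $\sU$ built from the maps $j\times\mathrm{id}_{W}$ with $W=\sO_{\sF}(G/K,G/H)$. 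Since $j=i_0\colon D^n\rtarr D^n\times I$ is the inclusion of an endpoint, each $j\times\mathrm{id}_{W}$ is the inclusion $Z\times\{0\}\rtarr Z\times I$ of a strong deformation retract, with $Z=D^n\times W$; hence a single stage of cell attachment is again a strong deformation retract at the level of $G/K$. As these are closed inclusions, the compactness of spheres relative to transfinite composites in the compactly generated setting then gives that the whole cell complex is a level $\sF$-equivalence, which is the acyclicity condition.

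It remains to record the formal properties. Properness is inherited levelwise from $\sU$, since pushouts and pullbacks in $\mathbf{Pre}(\sO_{\sF},\sU)$ are computed levelwise and $\sU$ is proper. For the $\sV$-model structure I would check the pushout-product axiom on generators: the tensoring is levelwise, so $F_{G/H}(A)\otimes K\iso F_{G/H}(A\times K)$ and hence $F_{G/H}i\,\square\,k\iso F_{G/H}(i\,\square\,k)$ for a generating cofibration $k$ of $\sU$; since $\sU$ is a monoidal model category and $F_{G/H}$ preserves cofibrations and acyclic cofibrations, the axiom follows. Finally, the identification of the generating sets is immediate from the formula $F_{G/H}Y\iso\bU(G/H\times Y)$ established above: applied to the maps $i\in\mathcal{I}$ and $j\in\mathcal{J}$, which give the generators $\mathcal{I}_{\sF}$ and $\mathcal{J}_{\sF}$ of $G\sU$ in \myref{ein}, it yields $\{F_{G/H}i\}\iso\bU\mathcal{I}_{\sF}$ and $\{F_{G/H}j\}\iso\bU\mathcal{J}_{\sF}$.

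I expect the acyclicity condition to be the only genuine obstacle, since the remaining axioms are formal or reduce levelwise to known properties of $\sU$. The subtlety is point-set topological: for a general topological group the mapping spaces $\sO_{\sF}(G/K,G/H)$ need not be cofibrant, so $j\times\mathrm{id}_{W}$ need not be a Quillen cofibration, and one cannot simply invoke closure of acyclic cofibrations under pushout and transfinite composition. What rescues the argument is the endpoint-inclusion form of the maps in $\mathcal{J}$, which supplies an explicit strong deformation retraction at each stage for every $W$, together with the compactly generated framework of \cite{GM}, which controls the passage to transfinite colimits of closed inclusions.
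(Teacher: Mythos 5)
Your proposal is correct and follows essentially the same route as the paper: both invoke the general level model structure theorem of \cite{GM} and reduce everything to the acyclicity condition, which is settled by observing that the generators coming from $\mathcal{J}$ are inclusions of deformation retracts, so that relative cell complexes built from them are level equivalences. The only cosmetic difference is that you verify acyclicity directly level by level at each $G/K$, whereas the paper transports the condition along the full and faithful colimit-preserving functor $\bU$ and argues with $G$-deformation retracts in $G\sU$; since evaluating $\bU$ at $G/K$ is passage to $K$-fixed points, these are the same computation.
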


\begin{thm}\mylabel{drei} {The pair} $(\bT,\bU)$ is a Quillen $\sU$-equivalence between {the categories} $G\sU$ and 
$\mathbf{Pre} (\sO_{\sF},\sU)$.
\end{thm}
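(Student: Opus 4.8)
The plan is to verify first that $(\bT,\bU)$ is a Quillen adjunction and then to promote it to a Quillen equivalence by analysing the unit. For the adjunction I would invoke \myref{zwei}: the generating cofibrations and acyclic cofibrations of $\mathbf{Pre}(\sO_{\sF},\sU)$ are the maps $F_{G/H}i$ and $F_{G/H}j$, which under the identification $F_{G/H}Y\iso \bU(G/H\times Y)$ are precisely $\bU\mathcal{I}_{\sF}$ and $\bU\mathcal{J}_{\sF}$. Since $\bT$ is a left adjoint and $\bT\bU=Id$, applying $\bT$ carries these back to $\mathcal{I}_{\sF}$ and $\mathcal{J}_{\sF}$, which by \myref{ein} are the generating cofibrations and acyclic cofibrations of $G\sU$. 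A left adjoint sending generating cofibrations and generating acyclic cofibrations to cofibrations and acyclic cofibrations is left Quillen, so $(\bT,\bU)$ is a Quillen adjunction.

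The central observation for the equivalence is that $\bU$ both preserves and reflects all weak equivalences: since $(\bU Y)(G/H)=Y^H$, the map $\bU f$ is a level $\sF$-equivalence exactly when each $f^H$ is a weak equivalence, i.e. exactly when $f$ is an $\sF$-equivalence. In particular $\bU$ reflects weak equivalences between fibrant objects, so a standard recognition criterion for Quillen equivalences reduces the problem to the derived unit. For cofibrant $A$ with fibrant replacement $r\colon \bT A\rtarr R\bT A$, the derived unit is the composite of $\et_A\colon A\rtarr \bU\bT A$ with $\bU r\colon \bU\bT A\rtarr \bU R\bT A$; since $\bU$ preserves the weak equivalence $r$, two-out-of-three reduces the entire theorem to the single claim that the unit $\et_A\colon A\rtarr \bU\bT A$ is a weak equivalence for every cofibrant $A$.

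I would prove this claim by induction over cell structure. The unit is an isomorphism on every object in the image of $\bU$, because $\et\bU$ is the identity; in particular it is an isomorphism on the initial object $\bU(\varnothing)$ and on the domains and codomains $F_{G/H}Y\iso\bU(G/H\times Y)$ of the generating cofibrations. For the inductive step one applies $\bU\bT$ to a cell-attachment pushout and to a transfinite composite of cofibrations. Here $\bT$ preserves all colimits and sends generating cofibrations to cofibrations, while $\bU$, being levelwise passage to fixed points of compactly generated $G$-spaces, preserves pushouts along cofibrations and transfinite composites of cofibrations. Thus the naturality squares of $\et$ become maps of homotopy pushout (resp. sequential colimit) diagrams through cofibrations, three of whose corners already carry unit weak equivalences. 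Left properness together with the gluing lemma, and closure of weak equivalences under transfinite composites of cofibrations, then yield a unit weak equivalence at the next stage, hence on all cell complexes; closure under retracts finishes the cofibrant case.

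The main obstacle is the colimit-preservation property of $\bU$ used in the inductive step: as a right adjoint $\bU$ need not preserve colimits, so the argument depends essentially on the fact that the fixed-point functors $(-)^H$ commute with pushouts along cofibrations and with transfinite composites of cofibrations of compactly generated $G$-spaces. This is exactly where the compactly generated and proper hypotheses of \myref{ein} and \myref{zwei} enter, and it is what legitimizes the cellular induction even though $\bU$ is not a left adjoint.
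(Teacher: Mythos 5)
Your argument is correct and follows essentially the same route as the paper: reduce to the unit on cofibrant objects (using that $\bU$ creates, hence preserves and reflects, the weak equivalences), check the unit on the generating cells via the homeomorphisms $Y^H\times V\iso (Y\times V)^H$, and propagate through the cellular induction using that the fixed-point functors commute with the colimits appearing in cell constructions. The only difference is that the paper observes the unit $\et\colon X\rtarr \bU\bT X$ is actually an \emph{isomorphism} for all cofibrant $X$ (isomorphisms on the cells together with preservation of the relevant colimits by both $\bT$ and $\bU$ give isomorphisms on cell complexes and their retracts), so your appeal to left properness and the gluing lemma, while valid, is unnecessary.
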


{In light of \myref{drei}, we refer to the weak equivalences and fibrations of the presheaf category $\mathbf{Pre} (\sO_{\sF},\sU)$ as level $\sF$-equivalences and level $\sF$-fibrations.}

\myref{zwei} holds by \cite[4.30]{GM}.  The only point requiring verification
is that, in the language of \cite[4.13]{GM}, $\mathcal{J}_{\sF}$ satisfies the acyclicity 
condition for the level $\sF$-equivalences.  This means that any relative cell complex 
$A\rtarr X$ constructed from $\bU\mathcal{J}_{\sF}$ is a level $\sF$-equivalence.
By definition, $\bU$ creates  the $\sF$-equivalences and $\sF$-fibrations in $G\sU$, 
as in \cite[1.16]{GM}, and \cite[1.17]{GM} applies to prove Theorems \ref{ein} and \ref{drei}.
In this example, its acyclicity condition means that any relative $\mathcal{J}_{\sF}$-cell 
complex is an $\sF$-equivalence.  

As observed in \cite[p. 40]{MM}, $\bU$ carries $\mathcal{I}_{\sF}$-cell complexes and $\mathcal{J}_{\sF}$-cell 
complexes in $G\sU$ to  corresponding cell complexes in  $\mathbf{Pre} (\sO_{\sF},\sU)$, mapping relative
cell complexes with source $Y$ bijectively to relative cell complexes with source $\bU Y$.  This is not formal
but rather depends on the fact that $\bU$ preserves certain pushouts \cite[III.1.10]{MM}.  Therefore the acyclicity 
condition needed to prove Theorem \ref{ein} follows from that needed to prove \ref{zwei}.  However, since the maps
in $\mathcal{J}$ are inclusions of deformation retracts, the maps in $\mathcal{J}_{\sF}$ are inclusions of $G$-deformation 
retracts, hence the maps in $\bU \mathcal{J}_{\sF}$ are levelwise deformation retracts.  The acyclicity of relative 
$\bU \mathcal{J}_{\sF}$-cell complexes follows by passage to coproducts, pushouts, and sequential colimits.

To prove \myref{drei}, we observe that, for a $G$-space $Y$ and a space $V$, the maps $\et$ of 
\cite[1.12]{GM} are the evident homeomorphisms 
$Y^H\times V \iso (Y\times V)^H$. This implies that $\et\colon X\rtarr \bU\bT X$
is an isomorphism when $X$ is the domain or codomain of a map in $\mathcal{I}_{\sF}$.  Again using that
$\bU$ preserves the relevant colimits, it follows (as in \cite[1.20]{GM}) that 
$\et\colon X\rtarr \bU\bT X$ is an isomorphism in $\mathbf{Pre} (\sO_{\sF},\sU)$ for all 
cofibrant $X$.  

Using smash products instead of cartesian products and giving orbit $G$-spaces disjoint basepoints,
everything above works just as well using the categories $\sT$ and $G\sT$ of nondegenerately based spaces 
and nondegenerately based $G$-spaces instead of $\sU$ and $G\sU$. 

\section{Hopf groups and their actions}\label{Hopf}

\subsection{Hopf groups}\label{Hgrp}
We shall take the summary of the previous section as a template for generalization, and we must first generalize 
the notion of  a topological group.  As in the introduction, we take $\sV$ to be a cosmos, that is, a bicomplete 
closed symmetric monoidal category with product $\otimes$ and unit object $I$.  We fix $\sV$ throughout the paper.  We write $\sV(X,Y)$ for the
set of maps $X\rtarr Y$ in $\sV$, and we write $\ul{\sV}$ for the internal hom in $\sV$.  Then
\begin{equation}\label{nonequi}  
\sV(X,Y) = {\sV}(I, \ul{\sV}(X,Y)). 
\end{equation}
The $\sV$-functor $\ul{\sV}$ is characterized by the enriched adjunction 
$$   \ul{\sV}(X \otimes Y, Z) \iso \ul{\sV}(X,\ul{\sV}(Y,Z)).$$
Applying ${\sV}(I, -)$ we obtain the ordinary (set level) adjunction
$$  {\sV}(X \otimes Y, Z) \iso {\sV}(X,\ul{\sV}(Y,Z)).$$

As a starting point, we would like generalizations of fixed point $G$-spaces and generalizations of (\ref{Gfixed})  and (\ref{enrichset}) 
for group actions in a general cosmos $\sV$, namely 
\begin{equation}\label{Gfixed2}
 \ul{G\sV}(X,Y) =  \ul{\sV}_G(X,Y)^G  
\end{equation}
and therefore 
\begin{equation}\label{enrichset2}  
{G\sV}(X,Y) = {\sV}(I, \ul {G\sV}(X,Y)) = {\sV}(I, \ul{\sV}_G(X,Y)^G) = G\sV(I,\ul{\sV}_G(X,Y)). 
\end{equation}

Once we understand the category $G\sV$ of $G$-objects and $G$-maps as a cosmos, with internal hom objects $\ul{\sV}_G(X,Y)$
in $G\sV$, the agreement of the first and last terms in (\ref{enrichset2}) will be a special case of (\ref{nonequi}). There is no problem when $\sV$ is cartesian monoidal, so that 
$\otimes = \times$ and $I$ is a terminal object $\ast$ in $\sV$.  In that case, a $\sV$-group is just a group in $\sV$, defined via the 
usual diagrams, and (\ref{Gfixed2}) holds.   In general, we  must first define what we mean by a group 
object in $\sV$.  

 \begin{defn} A comonoid $C$ in $\sV$ is a monoid in the opposite category $\sV^{op}$.  Thus it is an object of $\sV$ 
 together with a {comultiplication} $\ps\colon  C\rtarr C\otimes C$  and an augmentation $\epz\colon C\rtarr I$ such that the 
 duals of the diagrams defining a monoid in $\sV$ commute. We say that $C$ is cocommutative if $\ga\ps = \ps$, where
 $\ga$ is the symmetry isomorphism in $\sV$. Let $\Co$ denote the category of cocommutative comonoids in $\sV$.  
 Note that $I$ is a cocommutative comonoid with $\ps$ the unit isomorphism and  $\epz$ the identity.  A unit for 
 a comonoid $C$ is a map of comonoids $\et\colon I\rtarr C$. 
  \end{defn}

The following elementary and quite standard categorical observations explain the relevance of $\Co$.
 
 \begin{lem}\mylabel{CartComonoids} If $\sV$ is cartesian monoidal, then the forgetful functor $\Co \rtarr  \sV$ is an isomorphism of categories.
 \end{lem}
 \begin{proof}  For an object $X$ of $\sV$, the diagonal map  $\DE$ and the {unique} map $\epz\colon X\rtarr \ast$ 
 specify the unique comonoid structure on $X$, and it is cocommutative.
 \end{proof}
  
 \begin{lem}  The bifunctor $\otimes$ in $\sV$ extends to a bifunctor on $\Co$.
 \end{lem}
 \begin{proof} 
 The {comultiplication} on $C\otimes D$ is the composite
\[ \xymatrix@1{
 C\otimes D \ar[r]^-{\ps\otimes \ps}  & C\otimes C \otimes D \times D  
 \ar[r]^-{\id\otimes \ga\otimes \id} & C\otimes D \otimes C \times D, \\} \]
 and it is cocommutative.   The augmentation is 
  \[ \xymatrix@1{ C\otimes D \ar[r]^-{\epz\otimes \epz} & I\otimes I \iso I.  \qedhere \\}\]
  \end{proof}
  
  \begin{lem} A comonoid $C$ in $\sV$ is cocommutative if and only if $\ps\colon C\rtarr C\otimes C$ is a morphism of comonoids.
  \end{lem}
  \begin{proof} The dual statement about monoids is standard.  Writing a diagrammatic proof of that
  and reversing all the arrows gives the proof for comonoids.  
  \end{proof}
  
 \begin{lem}\mylabel{CoVcart}  The category $\Co$ is cartesian monoidal under the product $\otimes$. The object $I$ is terminal in $\Co$,
 the cartesian diagonal map $\Delta \colon C \rtarr C \otimes C$ is the comultiplication on $C$, and the cartesian product of 
 two maps is their tensor product.
 \end{lem} 
 \begin{proof} 
The coordinate projections are
$$\xymatrix@1{ C \iso C\otimes I & C\otimes D \ar[l]_-{\id \otimes \epz}\ar[r]^-{\epz\otimes \id}  & I\otimes D\iso D\\} $$
and the universal property is easily checked.
\end{proof}
 
More generally, $\Co$ also has pullbacks, but it  is not a cosmos since it is neither closed nor cocomplete.   Ignoring $\Co$, we have the notion of a monoid $G$ in $\sV$, given by 
a product $\ph$ and unit $\et$.  When $G$ is in $\Co$, with {comultiplication} $\ps$ and counit $\epz$, we have the notion of  a $\Co$-monoid, for which $\ph$ and $\et$ are required to be maps of comonoids.  For $\ph$, this means that
the following familiar diagram must commute, where $\ga$ is the symmetry isomorphism in $\sV$.
$$\xymatrix{ 
G\otimes G \ar[r]^-{\ph}  \ar[d]_{\ps\otimes \ps} & G \ar[r]^-{\ps} & G\otimes G \\
G\otimes G\otimes G \otimes G \ar[rr]_-{\id\otimes \ga\otimes \id} 
& & G\otimes G\otimes G\otimes G \ar[u]_{\ph\otimes \ph}\\} $$

Since $\Co$ is cartesian monoidal, we can define $\Co$-groups as well as $\Co$-monoids.  
A $\Co$-monoid $G$ is a $\Co$-group if there is map $\ch\colon G\rtarr G$  in 
$\sV$, which we call an antipode (as is standard for Hopf algebras), such that the following 
diagrams commute.
 \[ \xymatrix{
 G\otimes G  \ar@[2ex][rr]^-{\chi \otimes \id} & & G\otimes G \ar[d]^{\ph} \\
 G\ar[u]^{\ps}  \ar[r]_{\epz} &  I  \ar[r]_{\et}  & G\\} 
\ \ \text{and} \ \ \xymatrix{
 G\otimes G  \ar@[2ex][rr]^-{\id \otimes \chi} & & G\otimes G \ar[d]^{\ph} \\
 G\ar[u]^{\ps}  \ar[r]_{\epz} &  I  \ar[r]_{\et}  & G\\} 
 \]
The following result is proven exactly as in the special case of cocommutative Hopf algebras (e.g. \cite[\S21.3]{MP}).
 
 \begin{lem} \mylabel{analogs} The antipode $\chi$ is unique if it exists; it is an antihomomorphism, that is, a morphism of $\sV$-groups 
 $G\rtarr G^{op}$; and it is an involution, $\chi^2 = \id$.
 \end{lem}
 
 \begin{defn} A {\em Hopf group} in $\sV$ is a $\Co$-group.
 \end{defn}
 
 \begin{rem}\mylabel{discrete}  We map the category of sets into $\sV$ via the functor $I[-]$ that sends a set $S$ to the coproduct of 
 copies of $I$ indexed on $S$.  This functor is left adjoint to the functor $\sV(I,-)$.  Regarding the category of sets as cartesian 
 monoidal, the functor $I[-]$ is strong symmetric monoidal.   If $G$ is a discrete group, then $I[G]$ is a Hopf 
 group in $\sV$.  We think of it as  the $\sV$-group ring of $G$.  
 \end{rem}
 
To sum up, a Hopf group in $\sV$ is a cocommutative $\sV$-comonoid $(G,\ps,\epz)$  with an extension of structure 
$(G,\ps,\epz,\ph,\et,\ch)$ satisfying the group axioms.  Ignoring $\ch$, the definition encodes asymmetrically the usual defining properties of bialgebras:
	\begin{itemize}
		\item{}$\et$ is a unit for $(G,\ps,\epz)$ or, equivalently, $\epz$ is a counit for $(G,\ph,\et)$, and
		\item{}$\ph$ is a map of  $\sV$-comonoids or, equivalently, $\ps$ is a map of $\sV$-monoids.
	\end{itemize}

When $\sV = \bf{Set} $ is the category of sets, $G$ is just a discrete group.  When $\sV = \sU$, $G$ is a 
topological group.   When $\sV = \bf{sSet}$ is the category of simplicial sets, $G$ is a simplicial group.  When $\sV = R$-$\bf{Mod}$, $G$ is a 
cocommutative Hopf algebra over $R$.  This puts these examples and many others on an equal 
categorical footing. As illustrated  in \myref{discrete}, any strong symmetric monoidal functor $\sV\rtarr \sW$
between cosmoi sends Hopf groups in $\sV$ to Hopf groups in $\sW$.

\begin{rem} In a cartesian monoidal category, such as $\Co$, a Hopf group is the same data as a group object (cf. \myref{CartComonoids}). Since the forgetful functor $\Co \rtarr \sV$ is strong monoidal, it follows that every Hopf group is the strong monoidal image of a cartesian group object. Conversely, strong monoidal functors preserve Hopf groups, and hence every such image of a cartesian group object is a Hopf group.
\end{rem}

\begin{rem}  We can generalize the definition of a Hopf group by allowing $\sV$ to be braided monoidal and by not requiring
cocommutativity.  The resulting objects are studied in categorical combinatorics, where they are called Hopf monoids; see 
for example \cite{AM}.  In that generality, Hopf monoids are not monoids in a cartesian monoidal category, even if we do
require the {comultiplication} to be  cocommutative.  As pointed out to us by Marcelo Aguiar, when $\sV$ is only braided monoidal, the product $C\otimes D$ 
of cocommutative comonoids need not be cocommutative.
\end{rem}

\subsection{Actions of Hopf groups on objects of $\sV$-categories}\label{subsec:Hopfactions}
 Fix a Hopf group $G$ in $\sV$ and let $\sM$ be a category enriched in $\sV$, or a $\sV$-category for short.  
 We write $\ul{\sM}(X,Y)$ for the object in $\sV$ of  morphisms $X\rtarr Y$ in $\sM$ and we assume that $\sM$ 
 is $\sV$-bicomplete.  The standard exposition is \cite{Kelly}, but we shall review briefly the summary in \cite[\S4.1]{GM}.   
 The reader may prefer to focus on $\sM=\sV$ but the generality is essential to the applications;  
if $\sV$  is a common enriching category, such as $\sU$, $R$-$\bf{Mod}$,  or $\bf{sSet}$, one does 
 not want to focus just  on $\sV$.  

Before describing the actions of a Hopf group $G$ in a $\sV$-category $\sM$, we review the generalization of the tensor-hom adjunction to $\sM$. Recall 
the $\otimes$-product of $\sV$-categories $\sM$ and $\sN$, which is a $\sV$-category with the same objects as $\sM\times\sN$.  For objects $X,X'\in \sM$ and $Y,Y' \in \sN$, 
\[(\ul{\sM\otimes \sN})\big((X,Y),(X',Y')\big) = \ul{\sM}(X,X')\otimes \ul{\sN}(Y,Y'),\]
with the obvious units and composition defined in terms of those of $\sM$ and $\sN$.  
 In addition to being bicomplete in the usual sense, $\sM$ has tensor and cotensor $\sV$-bifunctors
\[ \odot\colon \sM\otimes \sV\rtarr \sM \ \ \ \text{and} \ \ \  
F\colon \sV^{\text{op}}\otimes \sM\rtarr \sM\]
that take part in $\sV$-adjunctions
\begin{equation}\label{biten} 
\ul{\sM}(M\odot V, N)\iso \ul{\sV}(V,\ul{\sM}(M,N))\iso \ul{\sM}(M,F(V,N)).
\end{equation}
These imply ordinary unenriched adjunctions
\begin{equation}\label{biten2} 
{\sM}(X\odot V, Y)\iso {\sV}(V,\ul{\sM}(X,Y))\iso {\sM}(X,F(V,Y)).
\end{equation}
By the discussion in \cite[\S4.1]{GM}, we are free to take tensors in the opposite order,  $V\odot M$ instead of $M\odot V$. 
That is convenient for our purposes when we take $V= G$.  With that convention, we have transitivity isomorphisms
\begin{equation}\label{biten3} 
(V\otimes W)\odot X \iso V\odot (W\odot X)
\end{equation}
for $V,W\in \sV$ and $X\in \sM$.

We can define a left $G$-action on an object $X\in \sM$ in three equivalent ways. The first one is perhaps most 
familiar, but to make sense of it we must use (\ref{biten3}) with $V = W = G$.
	\begin{itemize}
		\item{}A map $G\odot X \rtarr X$ in $\sM$ such that the evident diagrams commute.
		\item{}A map $G \rtarr \ul{\sM}(X,X)$ of $\sV$-monoids.
		\item{}A $\sV$-functor $X\colon \ul{G}\rtarr \sM$ such that $X(\ast) = X$.
	\end{itemize}
Here $\ul{G}$ denotes $G$ regarded as a $\sV$-category with a single object $\ast$. 
From a categorical point of view, the last definition is particularly convenient since {it allows us to describe all standard equivariant constructions in terms of Kan extension.}

A fundamental reason for focusing on Hopf groups $G$ and not just $\sV$-monoids is that the enrichments we saw
when $\sM=\sV = \sU$ generalize.   For $G$-objects $X$ and $Y$ in $\sM$, we let $\ul{\sM}_G(X,Y)$ denote the object 
$\ul{\sM}(X,Y)$ of  $\sV$ with the conjugation action by $G$ given by the functor
\[\xymatrix@1{
\ul{G} \ar[r]^-{\ps} & \ul{G}\otimes \ul{G} \ar[r]^-{\ch\otimes \id} & \ul{G}^{op}\otimes \ul{G} \ar[r]^{X^{op}\otimes Y} & \sM^{op}\otimes \sM \ar[r]^-{\ul{\sM}} & \sV.\\} \]
Then $\ul{\sM}_G(X,Y)$ is a bifunctor from $G$-objects in $\sM$ to $G$-objects in $\sV$.

Analogously, for $G$-objects $V\in \sV$ and $Y\in \sM$,  we write $F_G(V,Y)$ for the object $F(V,Y)$ of $\sM$ with the conjugation action given by the functor
\[\xymatrix@1{
\ul{G} \ar[r]^-{\ps} & \ul{G}\otimes \ul{G} \ar[r]^-{\ch\otimes \id} & \ul{G}^{op}\otimes \ul{G} \ar[r]^{V^{op}\otimes Y} & \sV^{op}\otimes \sM \ar[r]^-{F} & \sM.\\} \]
This is again a bifunctor to $G$-objects in $\sM$. 

If $V$ and $W$ are $G$-objects in $\sV$, then so is $V\otimes W$; the action is given by the $\sV$-functor
\[ \xymatrix@1{ \ul G \ar[r]^-\psi & \ul G\otimes \ul G \ar[r]^-{V\otimes W} & \sV \otimes \sV \ar[r]^-{\otimes} & \sV. \\} \]
This gives a bifunctor to $G$-objects in $\sV$.\footnote{For notational consistency, we might use some such notation
as $V\otimes^G W$, but we desist.}
For $G$-objects $Z$ in $\sV$ we have the natural isomorphism 
\[ \ul{\sV}_G(V\otimes W, Z) \iso \ul{\sV}_G(V, \ul{\sV}_G(W,Z))\]
of $G$-objects in $\sV$.
More generally, if $V$ is a $G$-object in $\sV$ and $X$ is a $G$-object in $\sM$, then $V\odot X$ is a $G$-object in $\sM$
and, for $G$-objects $Y$ in $\sM$ we have the natural isomorphisms of $G$-objects in $\sV$
\begin{equation}\label{tencoten}
 \ul{\sM}_G(V\odot X, Y) \iso \ul{\sV}_G(V, \ul{\sM}_G(X,Y)) \iso \ul{\sM}_G(X,F_G(V,Y)).
\end{equation}

\subsection{Categories of $G$-objects and functors relating them}\label{Cats}

Retaining the assumptions on $\sV$ and $\sM$ from the first paragraphs of \S\ref{Hgrp} and \S\ref{subsec:Hopfactions}, define $G\sM$ to be the category of $G$-objects in $\sM$ and $G$-maps between them.  
In particular, we have the category $G\sV$.  Remember that $\sV$ is a cosmos; that is, a bicomplete closed symmetric monoidal category
with product $\otimes$, unit object $I$, and internal hom $\ul{\sV}$, and that $\sM$ is a bicomplete $\sV$-category.  Our equivariant categories have double enrichment, in both $\sV$ and $G\sV$, just as we saw in the case of $G\sU$.   

First, $G\sM$ is enriched in $\sV$.  For $G$-objects $X$ and $Y$ in $\sM$, we define the object $\ul{G\sM}(X,Y)$ in $\sV$ to be the equalizer of two maps 
\[\la,\rh\colon \xymatrix@1{ \ul{\sM}(X,Y) \ar@<.4ex>[r] \ar@<-.4ex>[r] & \ul{\sV}(G,\ul{\sM}(X,Y)) }\]
Thinking of (left) actions as functors $\ul{G} \rtarr \sM$, 
$\la$ and $\rho$ are the adjoints of 
\[\xymatrix{ \ul{G} \ar[r]^-Y & \sM \ar[rr]^-{\ul{\sM}(X,-)} & & \ul{\sV} \\}	\quad	\text{and}	\quad \xymatrix{\ul{G}^{op} \ar[r]^-{X^{op}}  & \sM^{op} \ar[rr]^-{\ul{\sM}(-,Y)} & & \ul{\sV}, \\} \]
respectively. 
Unravelling this, one checks that it is an enriched encapsulation of the desired equivariance relation $f(gx) = gf(x)$.

For an object $X\in \sM$, we let $\epz^* X$ denote $X$ with the trivial $G$-action
$$ \xymatrix{ G\odot X \ar[r]^-{\epz\odot \id} & I \odot X \iso M.\\}  $$
In particular we agree to regard $I$ as the $G$-trivial $G$-object $\epz^*I$ in $\sV$.
We have already defined $\ul{\sM}_G(X,Y)$ to be the object of $G\sV$ obtained by giving 
$\ul{\sM}(X,Y)\in\sV$ its conjugation $G$-action, and we define 
$$  \ul{\sM}_G(X,Y)^G = \ul{G\sV}(I, \ul{G\sM}(X,Y)). $$
Now $\ul{\sV}_G(V,W)$ gives the internal hom required to make sense of the following crucial, but
elementary, result.
\begin{thm}  For any Hopf group $G$, the category $G\sV$ is a cosmos.  It has double enrichment, in $\sV$ and $G\sV$,
related by a canonical natural isomorphism
$$  \ul{\sV}_G(V,W)^G \iso \ul{G\sV}(V,W).$$
\end{thm}

The product is $\otimes$ with diagonal $G$-action, the unit is $I$ with trivial $G$-action, the internal hom
is $\ul{\sV}_G$ and the hom in $\sV$ is $\ul{G\sV}$. The isomorphism is a comparison of
equalizers.  Limits and colimits are constructed in $\sV$  and given $G$-actions induced by the actions on inputs.  
Similarly, we have the following result.

\begin{thm}  The category $G\sM$ is a bicomplete $G\sV$-category with double enrichment in $G\sV$ and $\sV$
related by a canonical natural isomorphism 
$$  \ul{\sM}_G(X,Y)^G \iso \ul{G\sM}(X,Y).$$
\end{thm}

The tensors are $V\odot X$ with diagonal $G$-action and the cotensors are $F_G(V,X)$.
We shall discuss the double enrichment, in $\sV$ and $G\sV$, a little more categorically
and say a bit about the proofs in \S\ref{double}.

The essential starting point for enriched equivariant homotopy theory is 
an understanding of the fixed point objects $X^H$ and orbit objects $H \backslash X$
in $\sM$ for objects $X\in G\sM$ and subgroups $H$ of $G$.{\footnote{In \cite{DK}, 
the authors start with a simplicially enriched category $\sN$ and a set $\sO$ 
of objects, which they call `orbits', in $\sN$. For $O\in\sO$ and $N\in\sN$, 
they view the simplicial sets $\sN(O,N)$ as analogues of fixed point 
objects. When $\sN=G\t{-}\mathbf{sSet}$, their context leads to the simplicial analogue of
\S\ref{GTop}.  However, their general context is not relevant to the equivariant theory 
discussed here since the natural fixed point objects $N^H$ are in $\sN$ and not $\mathbf{sSet}$, so play
no role in their theory.}  We also need induction and coinduction functors $H\sM\rtarr G\sM$ 
for inclusions $\io\colon H\subset G$.  Just as for spaces, inclusions are understood to be ``closed''.  
We give an appropriate categorical meaning of closed in \myref{Defnclosed}. 

If we view $G$ as a $\sV$-category with a single object, the needed functors can be specified as suitable 
(weighted) limits and colimits induced from change of group homomorphisms, by which we understand 
morphisms of Hopf groups. In fact, they are all left or right Kan extensions along obvious change of group
functors induced by group homomorphisms.  However, we want a more concrete categorical perspective, 
and we leave it to the categorically minded reader to check that the definitions we give are indeed the Kan
extensions we indicate.

Let $\io\colon H\subset G$ be an inclusion.  We first define  ``orbit tensors'' $V\odot_H X$ 
for left $H$-objects $X\in \sM$ and right $H$-objects $V\in\sV$ and 
``fixed point cotensors'' $HF(V,X)$ for left $H$-objects $X\in \sM$ 
and $V\in \sV$.  These are objects of $\sM$, and they specialize to give 
change of group functors that are entirely analogous to those in familiar examples.

\begin{defn}\mylabel{Hcoten} Let $V$ be a right $H$-object in {$\sV$} and $X$ be a left $H$-object in $\sM$. 
Using the associativity isomorphism (\ref{biten3})  implicitly, 
define $V\odot_H X$ in $\sM$ to be the coequalizer   
{\[ \xymatrix@1{V\otimes H \odot X \ar@<.4ex>[r] \ar@<-.4ex>[r] & V\odot X \ar[r] & V\odot_H X.\\}\]}
Dually, for left $H$-objects $V$ in $\sV$ and $X$ in $\sM$, define $HF(V,X)$ in $\sM$ to be the equalizer
\[\xymatrix@1{HF(V,X) \ar[r] & F(V,X) \ar@<.4ex>[r] \ar@<-.4ex>[r] & F(H\otimes V,X).\\}\]
One of each of the parallel pairs of arrows is induced by the action of $H$ on $V$ and the other is 
induced by the action of $H$ on $X$. 
\end{defn}

We have the obvious universal properties, given in \myref{OrbitAdjoint} below.  If we specialize by taking $V=G$, then the left action of $G$ on
$G$ induces a left action on $G\odot_H X$ and the right action of $G$ on $G$ induces a left action of $G$
on $HF(G,X)$.  Composition of $\io\colon \ul H \rtarr \ul G$ with actions $\ul G\rtarr \sM$ gives the restriction 
$\sV$-functor $\io^*\colon \ul{G\sM}\rtarr \ul{H\sM}$. \myref{Hcoten} gives explicit identifications of the left and right Kan
extensions along $\io$, hence we have the following enriched adjunctions.

\begin{lem}\mylabel{OrbitAdjoint}  There are $\sV$-adjunctions 
\[ \ul{G\sM}(G\odot_{H}Y,X)\iso \ul{H\sM}(Y,\io^*X) \quad \text{and} \quad \ul{H\sM}(\io^*X,Y)\iso \ul{G\sM}(X, HF(G,Y)),\]
where $X\in G\sM$ and $Y\in H\sM$.
\end{lem}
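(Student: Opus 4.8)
The plan is to prove both $\sV$-adjunctions by a single device: present induction and coinduction as a coequalizer and an equalizer respectively, apply the appropriate enriched hom $\sV$-functor to turn that (co)equalizer into a limit in $\sV$, and then identify the resulting limit with the $H$-equivariant or $H$-fixed hom object by means of the enriched free/cofree adjunction between $\sM$ and $G\sM$. I will carry out the induction case in detail; the coinduction case is formally dual.

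First I would record the enriched free--forgetful comparison. The functor $\mathbf{I}[G]\odot(-)\colon \sM\rtarr G\sM$, with $G$ acting through its left action on $\mathbf{I}[G]$, is left $\sV$-adjoint to the forgetful functor $G\sM\rtarr \sM$: applying the bitensor adjunction (\ref{bitenGtoo}) and then passing to $G$-fixed points, together with the fact that $\mathbf{I}[G]$ is the free $G$-object on $\mathbf{I}$ (so that $\ul{G\sV}(\mathbf{I}[G],W)\iso W$), yields a natural isomorphism
\[ \ul{G\sM}(\mathbf{I}[G]\odot X, M)\iso \ul{\sM}(X,M) \]
in $\sV$ for $X\in\sM$ and $M\in G\sM$. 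Dually, $F(\mathbf{I}[G],-)$ with the right-translation $G$-action is right $\sV$-adjoint to the forgetful functor, giving $\ul{G\sM}(M,F(\mathbf{I}[G],N))\iso \ul{\sM}(M,N)$.

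Now for induction, $\mathbf{I}[G]\odot_H N$ is by definition the coequalizer (in $G\sM$, created in $\sM$) of the pair $\mathbf{I}[G]\odot\mathbf{I}[H]\otimes N\rightrightarrows \mathbf{I}[G]\odot N$. Since $\ul{G\sM}(-,M)$ is a $\sV$-functor that sends colimits in its contravariant variable to limits in $\sV$, applying it turns this coequalizer into an equalizer, and the free--forgetful isomorphism rewrites the two terms, so that
\[ \ul{G\sM}(\mathbf{I}[G]\odot_H N,M)\iso \mathrm{eq}\bigl(\ul{\sM}(N,M)\rightrightarrows \ul{\sV}(\mathbf{I}[H],\ul{\sM}(N,M))\bigr). \]
The one genuinely content-bearing step is to check that, under these identifications, the two parallel maps are exactly the pair whose equalizer presents the $H$-fixed points $\ul{\sM}_H(N,\io^*M)^H=\ul{H\sM}(N,\io^*M)$: one map is adjoint to the conjugation $H$-action on $\ul{\sM}(N,M)$ and the other is induced by the augmentation $\mathbf{I}[H]\rtarr\mathbf{I}$. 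This is where one must keep the left and right $H$-actions on $\mathbf{I}[G]$ and the $H$-action on $N$ straight, and it is settled by tracing which action induces which of the two structure maps in the definition of $\odot_H$ and comparing with the standard equalizer description of fixed points. The coinduction adjunction follows by the dual argument: $F_H(\mathbf{I}[G],N)$ is an equalizer, $\ul{G\sM}(M,-)$ preserves it, the cofree--forgetful isomorphism rewrites the terms, and the same bookkeeping identifies the resulting equalizer with $\ul{\sM}_H(\io^*M,N)^H=\ul{H\sM}(\io^*M,N)$.

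Finally I would note that every isomorphism used---the bitensor adjunctions, the free/cofree comparisons, and preservation of (co)limits---is $\sV$-natural in $M$ and $N$, so the composites are $\sV$-natural and hence exhibit genuine $\sV$-adjunctions rather than merely objectwise isomorphisms. In the $\sV$-group case of \S\ref{Bcon} the only change is that the coequalizer, the equalizer, and the free/cofree comparisons are read as the internal (weighted) constructions already set up there; the formal manipulations are unchanged. The main obstacle is therefore not the categorical skeleton but the action-bookkeeping in matching the two defining structure maps of $\odot_H$ and of $F_H(\mathbf{I}[G],-)$ with the equalizer presentation of $H$-fixed points.
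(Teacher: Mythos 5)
Your proposal is correct and matches the argument the paper intends: the lemma is stated there without proof as a consequence of the definitions, and the intended verification is exactly your route of applying $\ul{G\sM}(-,M)$ (resp.\ $\ul{G\sM}(M,-)$) to the defining coequalizer of $\mathbf{I}[G]\odot_H N$ (resp.\ equalizer of $F_H(\mathbf{I}[G],N)$), using the enriched free/cofree adjunction $\ul{G\sM}(\mathbf{I}[G]\odot X,M)\iso\ul{\sM}(X,M)$ and matching the resulting pair of parallel maps with the equalizer $F_H(\mathbf{I},-)$ presenting $H$-fixed points of $\ul{\sM}_H(N,\io^*M)$. The action bookkeeping you flag is indeed the only substantive check, and your description of the two maps (one adjoint to the conjugation action, one induced by the augmentation $\mathbf{I}[H]\rtarr\mathbf{I}$) is the right one.
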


Thinking of $\io^*$ as a restriction functor $\bR^G_H$,  we can think of {$G\odot_H-$} 
and $HF(G,-)$ as induction and coinduction.   In particular, applying this to $\et\colon I\rtarr G$, we obtain
free and cofree $H$-objects in $\sM$.

Just as for $\sV$, we write $\epz^*X$ for an object $X$ of $\sM$ with the trivial action of $G$, which is induced 
by applying $-\odot X$ to the counit $\epz\colon G\rtarr I$. 

\begin{defn} For $Y\in H\sM$, such as $Y=\io^*X$ for $X\in G\sM$,  define the orbit objects 
$H\backslash Y$ and fixed point objects $Y^H$ in $\sM$ to be 
\[ H\backslash Y  = \epz^* I\odot_{H} Y \ \ \ \text{and} \ \ \ Y^H= HF(\epz^* I, Y).\]
Specializing to $\sM=\sV$ and using the right action of $G$ on itself rather than the left action,
define orbit $G$-objects in $\sV$ by
 \[G/H = G\otimes_H \epz^*I .\]
\end{defn}

These functors can be identified as the left and right Kan extensions along $\varepsilon$, hence we have the 
following enriched adjunctions.

\begin{lem} There are $\sV$-adjunctions 
\[ \ul{H\sM}(Y,\epz^*Z)\iso \ul{\sM}(H\backslash Y,Z) \quad \text{and} \quad \ul{H\sM}(\epz^*Z, Y)\iso \ul{\sM}(Z,Y^H), \]
where $Y\in H\sM$ and $Z\in\sM$. 
\end{lem}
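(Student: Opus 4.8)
The plan is to derive both adjunctions directly from the (co)equalizer definitions of the orbit tensor and fixed point cotensor, using the identifications $N/H = \mathbf{I}\odot_H N$ and $N^H = F_H(\mathbf{I},N)$ together with the standard tensor--hom and cotensor--hom adjunctions of the $\sV$-category $\sM$ and the description of the $\sV$-enrichment of $H\sM$ as $\ul{H\sM}(X,Y) = \ul{\sM}_H(X,Y)^H$. The key observation is that applying a represented $\sV$-functor to the defining (co)equalizer turns it into an equalizer in $\sV$ that computes precisely the relevant $H$-fixed point object, so the adjunction is witnessed by a match of equalizers rather than by an explicit transposition.

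For the orbit adjunction, apply the contravariant $\sV$-functor $\ul{\sM}(-,L)$ to the coequalizer defining $N/H = \mathbf{I}\odot_H N$. Since a represented $\sV$-functor carries conical colimits to the corresponding limits, this yields an equalizer
\[ \ul{\sM}(N/H,L) \iso \mathrm{eq}\bigl(\ul{\sM}(N,L) \rightrightarrows \ul{\sM}(\mathbf{I}[H]\odot N,L)\bigr). \]
The tensor--hom adjunction identifies $\ul{\sM}(\mathbf{I}[H]\odot N,L) \iso \ul{\sV}(\mathbf{I}[H],\ul{\sM}(N,L))$, so the right-hand equalizer is exactly the one cutting out the $H$-fixed points of $\ul{\sM}(N,L)$ under the conjugation action: one leg is induced by the $H$-action on $N$, the other by the trivial right $H$-action on $\mathbf{I}$ (the augmentation $\mathbf{I}[H]\rtarr \mathbf{I}$). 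As $L$ carries the trivial action $\epz^*L$, this conjugation action is precisely the $H$-action defining $\ul{\sM}_H(N,L)$; and since the fixed point functor on $H\sV$ is itself $F_H(\mathbf{I},-)$, the equalizer equals $\ul{\sM}_H(N,\epz^*L)^H = \ul{H\sM}(N,\epz^*L)$, as required.

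The fixed point adjunction is dual. Applying the covariant, limit-preserving $\sV$-functor $\ul{\sM}(M,-)$ to the equalizer defining $N^H = F_H(\mathbf{I},N)$ and invoking the cotensor--hom adjunction $\ul{\sM}(M,F(\mathbf{I}[H],N)) \iso \ul{\sV}(\mathbf{I}[H],\ul{\sM}(M,N))$ presents $\ul{\sM}(M,N^H)$ as the equalizer computing the $H$-fixed points of $\ul{\sM}(M,N)$ under conjugation; with $M=\epz^*M$ carrying the trivial action this is $\ul{\sM}_H(\epz^*M,N)^H = \ul{H\sM}(\epz^*M,N)$. Both arguments run verbatim in the $\sV$-group case of \S\ref{Bcon}, where $\mathbf{I}[H]$ is $H$ itself and the conjugation action is the one given there by the explicit diagram.

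The only genuine work is bookkeeping: one must verify that the parallel pair obtained by dualizing each (co)equalizer coincides on the nose with the identity-versus-action pair whose equalizer defines $(-)^H$ on $H\sV$. This matching of the augmentation leg against the unit (constant) leg, and of the two action legs, is where the main obstacle lies, but it is a routine naturality check for the tensor--hom and cotensor--hom adjunctions and requires no input beyond their definitions.
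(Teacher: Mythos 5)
Your proof is correct and is exactly the argument the paper intends but leaves implicit (it states the lemma as "the expected enriched adjunctions" and elsewhere describes such enriched adjunctions as "a comparison of equalizer diagrams"): apply the represented $\sV$-functors to the defining (co)equalizers, use the tensor--hom and cotensor--hom adjunctions, and recognize the resulting equalizer as the one defining $(-)^H$ on the conjugation $H$-object $\ul{\sM}_H(-,-)$. The one bookkeeping point you flag is genuinely the only one (e.g.\ the precomposition leg involves $\al(h)$ while the conjugation action uses $\al(h)^{-1}$, reconciled by the automorphism induced by $\ch$, which fixes the augmentation leg), and it is routine as you say.
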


\begin{rem}\mylabel{InitFixedPts}  When $Y = \varnothing$ is an initial object of $H\sM$ (with trivial $H$-action), $H\backslash Y$ and $Y^H$ are also initial objects.   For the left adjoint, $H\backslash Y$, this is automatic.  For the right adjoint, $Y^H$, it holds because the equalizer
$\varnothing^H \rtarr \varnothing$ is a monomorphism with a section (because $\varnothing$ is initial), hence $\varnothing^H \cong \varnothing$.
\end{rem}

Just as in familiar examples, for $X\in G\sM$ we have a natural isomorphism 
\begin{equation} 
\xymatrix@1{G \odot_H\io^*X \ar[r]^{\iso} &  G/H\odot X \\}
\end{equation}
in $G\sM$, where the diagonal $G$-action is used on the right. It is the adjoint of the 
$H$-map $\io^*X\rtarr \io_*(G/H\odot X)$  induced by the canonical $H$-map $I = H/H\rtarr G/H$,
and its inverse is the composite
$$\xymatrix@1{ G\odot X \ar[r]^-{\DE\odot{\id}} & (G\otimes G) \odot X \iso G\odot(G\odot X) \ar[rr]^-{\id\odot \al(\ch\odot\ \id)} 
& & G\odot X\ar[r]^-{\pi} & G\odot_H \io^* X, \\} $$
where $\al$ is the action of $G$ on $X$ and $\pi$ is the canonical map. Composing adjunctions and omitting $\io^*$ from the notation,  for $Z\in \sM$ we obtain
\begin{equation}\label{keyG}
 \ul{G\sM}(G/H\odot \epz^*Z ,X)\iso \ul{\sM}(Z, X^H).
\end{equation}
Specializing to $\sM = \sV$ and  $Z=I$, (\ref{keyG}) gives
\[\ul{G\sV}(G/H,V)\iso V^H\]
for $V\in \sV$.
A further comparison of definitions gives the usual identifications
\begin{equation}\label{keyGthree}
H\backslash X \iso H\backslash G\odot_G X \ \ \text{and}\ \ X^H\iso GF(G/H, X).
\end{equation}

\section{Equivariant enriched model and presheaf categories}\label{equiv2}

\subsection{Standing assumptions and technical hypotheses}
We turn to model category theory.  As in the previous section, we fix a Hopf group $G$ in a cosmos $\sV$ and a bicomplete $\sV$-category $\sM$. 
Subgroups of $G$ are understood to be Hopf subgroups that are closed in the sense defined in \S\ref{closed}.

As in \cite[\S1.1]{GM}, we now add in standing model theoretic assumptions.  To avoid interrupting exposition with technicalities later, we then
give supplements.   In particular, as discussed generally in \cite{GM}, the assumption that $I$ is cofibrant can be weakened  
as in  \myref{unitbit},  but we find it convenient to assume it as the default.
	\begin{itemize}
  \item{}  We assume that $\sV$ is a cofibrantly generated proper monoidal model category. 
  
  \item{}  We assume that the unit $I$ of $\sV$ is cofibrant. 
  
\item{} We assume that $\sM$ is a cofibrantly generated $\sV$-model category with generating cofibrations $\mathcal{I}_\sM$ 
and generating acyclic cofibrations $\mathcal{J}_\sM$.
	\end{itemize}

Of course, we can take $\sM = \sV$.  We can place discrete groups $G$ in this general context by specializing to the $\sV$-Hopf group $I[G]$ and here we certainly want $I$ to be cofibrant in $\sV$; compare \cite[\S4.5]{GM}.  

We work with a general set $\sF$ of subgroups of $G$. We define the appropriate notion of a family $\sF$ in \S\ref{families}, but we shall not restrict attention to families here.  
We always assume that the trivial subgroup $e$ is in $\sF$; we can think of it as the unit $\et\colon I\rtarr G$.

\begin{rem} For a discrete group $G$, we have the family of (closed) subgroups $H$ of $I[G]$, as defined in \S\ref{closed}, 
and the set of those subgroups of the form $I[H]$, where $H$ is an ordinary subgroup of $G$. The latter is not a family in general, and our theory applies to both. 
\end{rem}

As in \cite{GM} properness will play little role in this paper, but the following notions will be needed to prove that left properness is inherited equivariantly, and it 
can also be used to prove that $\sV$-model structures exist when the unit of $\sV$ is not cofibrant.

\begin{defn}\mylabel{MGood} The set $\sF$ is \emph{$\sM$-good} if the functors $(G/H)^K \odot (-) \colon \sM \rtarr \sM$ preserve 
cofibrations for any  subgroups $H,K \in \sF$.
The set $\sF$ is very $\sM$-good if, in addition, the functor $(-)^H$ commutes with the tensors, coproducts, pushouts, and directed colimits 
that appear in the construction of relative $\sF\mathcal{I}_\sM$-cell complexes.
\end{defn}

\begin{rem}  Since $\sM$ is a $\sV$-model category, $\sF$ is $\sM$-good if every $(G/H)^K$ is cofibrant in $\sV$.
\end{rem}

\begin{exmp} Consider $\sM = \sU$ and a topological group $G$.  
	\begin{itemize}
		\item[(a)]  If $G$ is discrete, then every $(G/H)^K$ is discrete
				and every $\sF$ is $\sU$-good.
		\item[(b)]  If $G$ is a compact Lie group and $K$ and $H$ are closed subgroups,  then $(G/H)^K$ is a closed submanifold of $G/H$.  Here again
		every $\sF$ is $\sU$-good. 
				\item[(c)]  For a general topological group $G$, $\sF$ is rarely $\sU$-good. Despite this fact, the categories $G\mathbf{Top}$ and $\mathbf{Pre}(\sO_\sF,\sU)$ are left proper in full generality. One proof plays the Hurewicz and Quillen model structures on $\sU$ off of each other (cf. \cite[Theorem 6.5]{MMSS}), but we have not tried to formalize these techniques.
	\end{itemize}
\end{exmp}

\begin{exmp} Let $G$ be a discrete group and $\sV$ be a cosmos.  {The natural set map	
\[  I[-] : G\mathbf{Set}(G/K,G/H) \rtarr (G\sV)(I[G/K],I[G/H])  \]
transposes to a $\sV$-map $I[G\mathbf{Set}(G/K,G/H)]\rtarr \ul{G\sV}(I[G/K],I[G/H]).$
Assume that this map is an isomorphism in $\sV$ for all $H,K\subset G$.  Then $(I[G]/I[H])^{I[K]} \cong I[(G/H)^K]$ is cofibrant (assuming that $I$ is cofibrant).}  Therefore a set $\sF$ of subgroups of $I[G]$ is $\sV$-good if all 
elements of $\sF$ are of the form $I[H]$ for a subgroup $H$ of $G$.
\end{exmp}

\subsection{Equivariant model categories and functor categories in $\sM$} \label{GMModel}

As in \S\ref{GTop}, we view $\sF = \sA\ell\ell$ as the most interesting example.  It leads to ``genuine''
equivariant homotopy theory.  The example $\sF =\{e\}$ leads to ``naive'' or ``Borel" equivariant homotopy theory.
Since these notions of genuine and naive are different from the ones now standard in
equivariant stable homotopy theory, we make little use of the terms, but they express our point of view.
We say that a $G$-map $f$ is a $G$-equivalence if each $f^H$ is a nonequivariant weak equivalence.
We might say that $f$ is an $e$-equivalence if $f^e$, the underlying nonequivariant map, is a weak equivalence.  
These extremes are the special cases $\sF= \sA\ell\ell$ and $\sF =\{e\}$ of the following definition.

\begin{defn}  A $G$-map $f\colon M\rtarr N$ between objects of $G\sM$
is an $\sF$-equivalence or $\sF$-fibration if $f^H\colon M^H\rtarr N^H$
is a weak equivalence or fibration in $\sM$ for all $H\in \sF$; $f$ is
an $\sF$-cofibration if it satisfies the LLP with respect to all acyclic
$\sF$-fi\-bra\-tions.  Define $\sF \mathcal{I}_{\sM}$ and $\sF \mathcal{J}_{\sM}$ to 
be the sets of maps obtained by applying the functors $G/H\odot (-)$ to the
maps in $\mathcal{I}_{\sM}$ and $\mathcal{J}_{\sM}$, where $H\in\sF$.  
\end{defn}

Specializing \cite[4.16]{GM}, we obtain the following result. 

\begin{thm}\mylabel{GM}  If the sets $\sF \mathcal{I}_{\sM}$ and $\sF \mathcal{J}_{\sM}$ admit the 
small object argument and $\sF \mathcal{J}_{\sM}$ satisfies the acyclicity condition for the
$\sF$-equivalences, then $G\sM$ is a cofibrantly generated $\sV$-model category
with generating cofibrations $\sF \mathcal{I}_{\sM}$ and acyclic cofibrations  
$\sF \mathcal{J}_{\sM}$. If $\sM$ is right proper, then so is $G\sM$.  If $\sF$ is very $\sM$-good and $\sM$ 
is left proper, then so is $G\sM$.
\end{thm}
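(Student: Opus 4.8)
The plan is to obtain the model structure by transferring (right-inducing) the cofibrantly generated $\sV$-model structure on $\sM$ along the family of fixed-point functors $M\mapsto M^H$, $H\in\sF$, and to recognize this as exactly the situation governed by \cite[4.16]{GM}. By definition the $\sF$-equivalences and $\sF$-fibrations are the maps $f$ such that every $f^H$ is a weak equivalence or fibration in $\sM$, so these classes are created by the functors $(-)^H$; the whole content is to check that the general hypotheses of \cite[4.16]{GM} specialize to the two conditions in the statement.

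First I would assemble the formal inputs. The category $G\sM$ is bicomplete, with limits and colimits created in $\sM$, as recorded above. Because each $(-)^H$ takes values in $\sM$, the two-out-of-three property, the retract axiom, and the requisite closure properties for the $\sF$-equivalences are inherited at once from the corresponding properties in $\sM$. The crucial step is then the translation of lifting properties through the enriched adjunction (\ref{keyG}) (together with the orbit/fixed-point adjunction of the preceding lemma, applied to objects of $\sM$ equipped with the trivial action). For a map $i\colon A\rtarr B$ in $\sM$ and a map $f\colon X\rtarr Y$ in $G\sM$, a lifting problem of $\mathbf{I}[G/H]\odot i$ against $f$ in $G\sM$ corresponds under this adjunction to a lifting problem of $i$ against $f^H$ in $\sM$. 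Hence $f$ has the right lifting property with respect to $\sF\mathcal{I}_{\sM}$ (resp. $\sF\mathcal{J}_{\sM}$) if and only if each $f^H$ has the right lifting property with respect to $\mathcal{I}_{\sM}$ (resp. $\mathcal{J}_{\sM}$), that is, if and only if $f$ is an acyclic $\sF$-fibration (resp. an $\sF$-fibration). This identifies the maps injective with respect to the candidate generating sets with the intended classes of fibrations.

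With these identifications, the hypotheses of \cite[4.16]{GM} collapse precisely to the two assumptions of the theorem: that $\sF\mathcal{I}_{\sM}$ and $\sF\mathcal{J}_{\sM}$ admit the small object argument, and that $\sF\mathcal{J}_{\sM}$ satisfies the acyclicity condition for the $\sF$-equivalences. The small object argument supplies the two functorial factorizations, one lifting axiom is immediate from the adjunction translation above and the other follows by the usual retract argument, and the remaining axioms are formal. Since (\ref{keyG}) is an isomorphism of hom objects in $\sV$, each $\mathbf{I}[G/H]\odot(-)$ is a $\sV$-left adjoint, so \cite[4.16]{GM} delivers a genuine $\sV$-model structure rather than merely a Quillen one; the enriched (pushout-product) axiom transfers along these $\sV$-adjunctions from the corresponding axiom for $\sM$.

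The step I expect to require the most care is the adjunction bookkeeping of the second paragraph, namely confirming that the fixed-point description of the (acyclic) $\sF$-fibrations coincides with the lifting description generated by $\sF\mathcal{I}_{\sM}$ and $\sF\mathcal{J}_{\sM}$; this is exactly where (\ref{keyG}) does the work and is what makes the specialization of \cite[4.16]{GM} legitimate. The genuinely hard analytic input in such theorems is usually the acyclicity condition, that relative $\sF\mathcal{J}_{\sM}$-cell complexes are $\sF$-equivalences, but here that, like the smallness needed for the small object argument, is assumed as a hypothesis rather than proved.
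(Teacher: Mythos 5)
Your proposal matches the paper's argument: the paper likewise obtains the model structure by specializing the transfer result \cite[4.16]{GM}, uses the adjunction (\ref{keyG}) to convert lifting problems against $\mathbf{I}[G/H]\odot i$ into lifting problems against $i$ and $f^H$ in $\sM$ (and to reduce smallness to colimits of $H$-fixed points), and verifies the $\sV$-model axiom by checking the pullback-hom map of \cite[4.19]{GM} on generating cofibrations, where (\ref{keyG}) identifies it with the corresponding map $\ul{\sM}(N,E^H)\rtarr \ul{\sM}(M,E^H)\times_{\ul{\sM}(M,B^H)}\ul{\sM}(N,B^H)$ in $\sM$. The only difference is presentational: the paper writes out that last comparison explicitly rather than invoking transfer of the enriched axiom in general terms.
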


Here we have omitted condition (ii) of \cite[4.16]{GM} since it follows formally from (\ref{keyG}),
and (\ref{keyG}) also reduces the small object argument to a question
about colimits of (transfinite) sequences in $\sM$ that are obtained by 
passing to $H$-fixed points from relative cell complexes in $G\sM$.  In
practice, the small object argument always applies.  

The acyclicity condition, which is condition (i)  of \cite[4.16]{GM}, will 
hold provided that passage to $H$-fixed points from a relative 
$\sF \mathcal{J}_{\sM}$-cell complex gives a weak equivalence in $\sM$ for each $H\in \sF$.
In the topological situation of \S\ref{GTop}, the essential point is that passage
to $H$-fixed points commutes with pushouts, one leg of which is a closed inclusion.
This implies that the fixed point presheaf of an $\sF\mathcal{J}_\sM$-cell complex is an acyclic cell complex.

To show that $G\sM$ is a $\sV$-model category, we must show that for every cofibration $i\colon A\rtarr X$ and fibration 
$p\colon E\rtarr B$ in $G\sM$, the map 
\[ \ul{G\sM}(i^*,p_*):\ul{G\sM}(X,E)\rtarr \ul{G\sM}(A,E)\times_{\ul{G\sM}(A,B)} \ul{G\sM}(X,B)\]
is a fibration and is an $\sF$-equivalence if either $i$ or $p$
is an $\sF$-equivalence. It is enough to consider the case of a generating cofibration 
$i\colon G/H\odot M\rtarr G/H\odot N$, where $H\in \sF$. The map in question
then takes the form
\[ \xymatrix{
\ul{G\sM}( G/H\odot N,E) \ar[d] \\  
\ul{G\sM}( G/H\odot M,E)\times_{\ul{G\sM}(G/H\odot M,B)} \ul{G\sM}( G/H\odot N,B).\\}  \]
This map is isomorphic to the map
\[\ul{\sM}(N,E^H) \rtarr \ul{\sM}(M,E^H)\times_{\ul{\sM}(M,B^H)} \ul{\sM}(N,B^H).\]
The conclusion holds since $E^H\rtarr B^H$ is a fibration and $\sM$ is a $\sV$-model category.

Since $\sF$-fibrations and $\sF$-equivalences are determined on fixed points, and $(-)^H$ preserves pullbacks $G\sM$ automatically inherits right properness from $\sM$.
When $\sF$ is very $\sM$-good, one can check that $(-)^H$ preserves cofibrations and preserves pushouts one leg of which is a cofibration, hence $G\sM$ also inherits left
properness from $\sM$.

We can compare the model structures on $G\sM$ of \myref{GM} to model categories 
of presheaves in $\sM$, generalizing \myref{drei}.   We have the $\sV$-category $\mathbf{Fun} (\ul{G},\sM)$ of 
$\sV$-functors $\ul{G}\rtarr \sM$, alias $G$-objects in $\sM$.
The underlying category of $\mathbf{Fun} (\ul{G},\sM)$ is $G\sM$.
Since $\ul{G} = \sO^{op}_{e}$, this puts us in the case $\sF=\{e\}$. Evaluation at the single object of our domain category forgets the $G$-action,
and its left adjoint, $F_{G/e}$, sends an object $M\in \sM$ to the free $G$-object $G\odot M$ in $G\sM$.  
Here the level $\sV$-model structure of \cite[4.30]{GM} coincides  with the $\{e\}$-model structure on $G\sM$ of 
\myref{GM}. We regard this as a naively equivariant model structure, rather than a truly equivariant one.  

For larger sets $\sF$, such as $\sA\ell\ell$, we replace $\ul{G}$ by the orbit category $\sO_{\sF}$.

\begin{defn}
The orbit category $\sO_G$   of $G$ is the full $\sV$-subcategory of {$G\sV$} whose objects are the orbits $G/H$.
{Given a set $\sF$ of subgroups of $G$, the $\sV$-category}
 $\sO_{\sF}$ is the full {$\sV$-subcategory} of $\sO_G$ whose objects are the $G/H$ for $H\in \sF$.   
\end{defn}

\begin{rem}  We reiterate that when we specialize to $I[G]$ for a discrete group $G$
and a set $\sF$ of ordinary subgroups of $G$,  rather than Hopf subgroups of $I[G]$, the category $\sO_{\sF}$ is not to be confused with 
either the full $\sV$-category $I[\sO_{\sF}]$ with objects the $I[G/H]$ or its possibly non-full $\sV$-subcategory with
morphism objects
\[ I[\sO_{\sF}](I[G/H],I[G/K]) = I[(G/K)^H]. \]  
We shall restrict attention to full subcategories of $\sO_G$ for simplicity.  Much of the theory generalizes, but we won't usually get Quillen equivalences 
as in \myref{drei} in the non-full case.  When $\sV$ is cartesian monoidal and $G$ is a Hopf group in $\sV$, that is, a group internal to $\sV$, we
have no such distinctions: the full subcategory $\sO_{\sF}$ is generally the only variant in sight. 
\end{rem}

We described the $\sF$-model structure on $G\sM$ in \myref{GM}.  
For comparison, using the level $\sF$-classes of weak equivalences
and fibrations as in \cite[4.29]{GM}, \cite[4.30]{GM} gives a level $\sF$-model 
structure on $\mathbf{Fun} (\sO_{\sF}^{op},\sM)$.  Let $F_{G/H}$ denote
the presheaf in $\sV$ represented by the object $G/H$.  Its value on $G/K$ is   
\begin{equation}\label{wellohso}
\ul{G\sV}(G/K,G/H) \iso (G/H)^K. 
\end{equation}
As observed in \cite[\S5.1]{GM}, for a (small) $\sV$-category $\sD$, a presheaf $X$ 
in $\mathbf{Pre}(\sD,\sV)$, and an object $M\in \sM$, application of $\odot$ levelwise gives a functor
$X\odot M$ in  $\mathbf{Fun} (\sD^{op},\sM)$, and this construction is functorial. 
  
\begin{defn}\mylabel{FunFun}  Let $F_{\sF}\mathcal{I}_{\sM}$ and $F_{\sF}\mathcal{J}_{\sM}$ denote the 
sets of presheaves $F_{G/H}\odot i$ and $F_{G/H}\odot j$ in $\mathbf{Fun} (\sO_{\sF}^{op},\sM)$, where 
$H\in \sF$, $i\in \mathcal{I}_{\sM}$, and $j\in \mathcal{J}_{\sM}$.
\end{defn}

\begin{thm}\mylabel{GPM} If the sets $F_{\sF}\mathcal{I}_{\sM}$ and $F_{\sF}\mathcal{J}_{\sM}$ 
admit the small object argument and every relative $F_{\sF} \mathcal{J}_{\sM}$-cell complex is a
level $\sF$-equivalence, then $\mathbf{Fun} (\sO_{\sF}^{op},\sM)$ 
is a cofibrantly generated $\sV$-model category with generating cofibrations 
$F_{\sF}\mathcal{I}_{\sM}$ and acyclic cofibrations $F_{\sF}\mathcal{J}_{\sM}$. If $\sM$ is right proper, then so is $\mathbf{Fun}(\sO^{op}_\sF,\sM)$. 
If $\sF$ is $\sM$-good and $\sM$ is left proper, then so is $\mathbf{Fun}(\sO^{op}_\sF,\sM)$.
\end{thm}

The small argument condition is generally inherited from $\sM$, 
often reducing to a compactness observation in contexts of compactly 
generated model categories. In the cartesian monoidal case, the acyclicity condition 
is often an elaboration of the simple argument that  applied to spaces in \S\ref{GTop}.
The verification of the $\sV$-model category structure is similar to that given in \myref{GM}. 
We must show that the map $\ul{\mathbf{Fun}}(\sO_\sF^{op},\sM)(i,p)$ of \cite[4.19]{GM}
is a fibration and is acyclic if $i$ or $p$ is so, where $i\colon F_{G/H}\odot M\rtarr F_{G/H}\odot N$ 
is a generating cofibration and $p\colon E\rtarr B$ is a fibration. The map in question is isomorphic to
\[\ul{\sM}(N,E(G/H)) \rtarr \ul{\sM}(M,E(G/H))\times_{\ul{\sM}(M,B(G/H))}\ul{\sM}(N,B(G/H)).\]
The conclusion holds since $p\colon E(G/H)\rtarr B(G/H)$ is a fibration and $\sM$ is a $\sV$-model category. 
The inheritance of right and left properness is proven in the same way as for \myref{GM}.

\begin{rem}\mylabel{unitbit} If $I$ is not cofibrant, it is natural to assume that there is a cofibrant replacement 
$q\colon QI\rtarr I$ such that $q\odot \id\colon  QI\odot X\rtarr I\odot X\iso X$ is an $\sF$-equivalence for every 
cofibrant $X\in \sM$ and the functor $(-)^H$ commutes with tensoring with $q$ for each $H\in \sF$.  Then
$\mathbf{Fun}(\sO^{op}_\sF,\sM)$ inherits a $\sV$-model structure from $\sM$ if $\sF$ is good, and 
$G\sM$ inherits a $\sV$-model structure if $\sF$ is very good.
\end{rem}

Assuming the hypotheses of Theorems \ref{GM} and \ref{GPM}, we have the 
$\sF$-model categories $G\sM$ and
$\mathbf{Fun} (\sO_{\sF}^{op},\sM)$.  

\begin{thm}\mylabel{GObjAsPrshvs} There is a Quillen $\sV$-adjunction 
\[ \xymatrix@1{ \mathbf{Fun} (\sO_{\sF}^{op},\sM)\ar@<.4ex>[r]^-\bT & G\sM \ar@<.4ex>[l]^-\bU}\\ \]
and it is a Quillen equivalence if the functors $(-)^H$ preserve the
tensors, coproducts, pushouts, and directed colimits that appear 
in the construction of $\sF\mathcal{I}_\sM$-cell complexes.
\end{thm}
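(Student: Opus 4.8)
The plan is to establish the Quillen $\sV$-adjunction $(\bT,\bU)$ first, then upgrade it to a Quillen equivalence under the stated hypothesis. The adjunction itself should come from the enriched Kan extension formalism of \cite{GM}: the $\sV$-functor $\de\colon \mathbf{I}[\sO_{\sF}]\rtarr \sV\sO_{\sF}$ is replaced here by the inclusion $\sV\sO_{\sF}\rtarr G\sM$, and $\bU$ is the restricted-Yoneda functor sending $M\in G\sM$ to the presheaf $\bU(M)(\mathbf{I}[G/H]) = \ul{G\sM}(\mathbf{I}[G/H]\odot(-)\text{?},M)$, or more precisely the presheaf whose value at $\mathbf{I}[G/H]$ is the fixed-point object. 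The key computational input is the isomorphism (\ref{keyG}), which gives $\ul{G\sM}(\mathbf{I}[G/H]\odot M,N)\iso \ul{\sM}(M,N^H)$; this identifies $\bU(N)(\mathbf{I}[G/H])$ with $N^H$ and exhibits $\bU$ as the enriched presheaf of fixed-point objects. The left adjoint $\bT$ is then the enriched coend (weighted colimit) $\bT(X)=X\otimes_{\sV\sO_{\sF}}(\text{inclusion})$, which is guaranteed to exist since $G\sM$ is bicomplete as a $\sV$-category.

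First I would verify that $\bU$ is right Quillen, i.e.\ that it preserves fibrations and acyclic fibrations. By \myref{GPM} the fibrations in $\mathbf{Fun}((\sV\sO_{\sF})^{op},\sM)$ are the level $\sF$-fibrations, and by \myref{GM} the fibrations in $G\sM$ are detected by passage to $H$-fixed points. Since $\bU(M)(\mathbf{I}[G/H])\iso M^H$ via (\ref{keyG}), a fibration $f\colon M\rtarr N$ in $G\sM$ gives $f^H\colon M^H\rtarr N^H$ a fibration in $\sM$ for all $H\in\sF$, which is exactly a level $\sF$-fibration of presheaves; the same argument with acyclic fibrations handles the second class. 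This shows $(\bT,\bU)$ is a Quillen $\sV$-adjunction. I would then invoke the general comparison machinery of \cite{GM} (the analogue of the argument proving \myref{drei}) rather than reconstruct it.

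For the Quillen equivalence, the standard criterion is that $\bU$ reflects weak equivalences between fibrant objects and that the derived unit $\et\colon X\rtarr \bU\bT X$ is a weak equivalence for all cofibrant $X$. The reflection property is immediate from the fixed-point description. For the unit, I would follow the pattern of the proof of \myref{drei}: check that $\et$ is an isomorphism on the representable generators $F_{G/H}\odot M$ (where the generating cofibrations live), and then propagate to all cofibrant $X$ using that $\bU$ preserves the relevant colimits. The crucial point here is that on a generator, $\bU\bT(F_{G/H}\odot M)$ evaluated at $\mathbf{I}[G/K]$ is $(\mathbf{I}[G/H]\odot M)^K$, and this agrees with $F_{G/H}\odot M$ evaluated there precisely because of the compatibility isomorphism $(\mathbf{I}[G/H]\odot M)^K\iso (\mathbf{I}[G/H])^K\odot M$ — and this is exactly where the hypothesis that $(-)^H$ preserves tensors is needed.

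The main obstacle, and the content of the stated hypothesis, is showing that $\et$ is a weak equivalence for \emph{all} cofibrant $X$, not just the generators. Unlike in \S\ref{GTop}, where the fixed-point functor commutes strictly with products and $\et$ is an honest isomorphism on cells, here $(-)^H$ need not commute with the colimits building up a general cell complex. The hypothesis that $(-)^H$ preserves tensors, coproducts, pushouts, and sequential colimits is precisely what lets me commute $\bU=(-)^{(-)}$ past the cellular filtration: writing a cofibrant $X$ as a sequential colimit of pushouts of coproducts of generators, I can check that $\bT X$ built in $G\sM$ has fixed points computed levelwise, so that $\bU\bT X$ is again the corresponding colimit in presheaves and $\et$ is an isomorphism there. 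The delicate part is the bookkeeping to ensure these colimits are homotopy colimits (using that the maps in $F_{\sF}\mathcal{J}_{\sM}$ behave well, as in the deformation-retract argument of \S\ref{GTop}), so that an isomorphism on the level of the filtration quotients yields a genuine weak equivalence on the colimit.
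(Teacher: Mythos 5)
Your proposal is correct and follows essentially the same route as the paper: $\bU(N)_{G/H}=N^H$, the Quillen adjunction from $\bU$ creating the $\sF$-equivalences and $\sF$-fibrations, reduction to the unit $\et\colon X\rtarr\bU\bT X$ on cofibrant $X$, an isomorphism on the generators $F_{G/H}\odot M$ via $(\mathbf{I}[G/H]\odot M)^K\iso(\mathbf{I}[G/H])^K\odot M$, and propagation through the cellular filtration using the colimit hypothesis. The only differences are cosmetic: the paper identifies $\bT$ explicitly as evaluation at $G/e$ rather than as a weighted colimit, and your closing concern about homotopy colimits is unnecessary, since the hypotheses make $\et$ a strict isomorphism on every cell complex (hence on every retract of one).
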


\begin{proof}  We have displayed the adjunction on underlying categories;
on the enriched level, the corresponding adjunction is a comparison of equalizer
diagrams. We shall elaborate a bit in \S\ref{TU}. For $N\in G\sM$, $\bU(N)_{G/H} = N^H$.
For $X\in \mathbf{Fun} (\sO_{\sF}^{op},\sM)$, $\bT X = X_{G/e}$.  
Since $\bU$ creates the $\sF$-equivalences and $\sF$-fibrations in $G\sM$, {the pair}
$(\bT,\bU)$ is a Quillen adjunction, and it is a Quillen equivalence if
and only if $\et\colon X\rtarr \bU\bT X$ is a level equivalence
when $X\in \mathbf{Fun} (\sO_{\sF}^{op},\sM)$ is cofibrant.  First consider
$X= F_{G/H}\odot M$, where $M\in \sM$ (not $G\sM$). Evaluated at $G/e$, this
gives $G/H\odot M$, by (\ref{wellohso}). Now take $K$-fixed points.
The assumption that $(-)^K$ preserves tensors means that the result is 
$(G/H)^K\odot M$. This agrees with $X_{G/K}$, and $\et$ is an isomorphism. This last sentence 
explains why we require $\sO_{\sF}$ to be a full subcategory of $G\sV$. The assumed commutation of passage to $K$-fixed points and the relevant 
colimits ensures that $\bU$ maps relative cell complexes to relative cell complexes 
bijectively and that $\et$ is an isomorphism for any cell complex 
$X$, just as for topological spaces in \S\ref{GTop}.
Note that this implicitly uses Remark~\ref{InitFixedPts} to begin the induction.
\end{proof}

\begin{rem} When $\sM$ is compactly generated \cite[\S15.2]{MP}, only sequential colimits (over $\omega$)
need be considered in the last statement.
\end{rem}

\subsection{Equivariant model categories and presheaf categories in $\sV$}\label{GVModel}

Now that we understand equivariant model categories as functor
categories in $\sM$, we can understand them as 
presheaf categories in $\sV$ whenever we can understand $\sM$
itself as a presheaf category in $\sV$.  That is, if we have an
answer to one of \cite[Questions 0.1 -- 0.4]{GM} for $\sM$, then we 
have an answer to an analogous question with $\sM$ replaced by $G\sM$.  
This is immediate from the standard observation that a 
functor category in a functor category is again a functor category.

\begin{prop}\mylabel{Composite} Let $\sD$ and $\sE$ be small $\sV$-categories and
let $\sN$ be any $\sV$-category.  Then there is a canonical
isomorphism of $\sV$-categories
\[ \mathbf{Fun} (\sD,\mathbf{Fun} (\sE,\sN)) \iso \mathbf{Fun} (\sD\otimes \sE,\sN).\]
If we have level $\sV$-model structures induced by a $\sV$-model structure on $\sN$ 
on all functor categories in sight, then this is an isomorphism of $\sV$-model categories.
\end{prop}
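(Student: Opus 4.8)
The plan is to exhibit the isomorphism $\mathbf{Fun}(\sD,\mathbf{Fun}(\sE,\sN))\iso\mathbf{Fun}(\sD\otimes\sE,\sN)$ directly as a $\sV$-functor and check that it is an isomorphism of $\sV$-categories, then observe that it manifestly respects the level model structures. First I would recall that the tensor product $\sD\otimes\sE$ of $\sV$-categories has objects the pairs $(d,e)$ of objects and hom objects $(\sD\otimes\sE)((d,e),(d',e')) = \sD(d,d')\otimes\sE(e,e')$, with composition assembled from those of $\sD$ and $\sE$ (using the symmetry of $\sV$ to interleave the two middle factors). A $\sV$-functor $F\colon\sD\otimes\sE\rtarr\sN$ then consists of an object $F(d,e)$ for each pair together with maps $\sD(d,d')\otimes\sE(e,e')\rtarr\ul{\sN}(F(d,e),F(d',e'))$ compatible with composition and units. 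The content of the claim is the enriched analogue of the exponential law: a functor of two variables is the same as a functor in one variable valued in functors in the other.

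The core of the argument is a currying bijection. Given $F\colon\sD\otimes\sE\rtarr\sN$, I would define $\Phi(F)\colon\sD\rtarr\mathbf{Fun}(\sE,\sN)$ by sending $d$ to the presheaf $e\mapsto F(d,e)$; on hom objects, $\Phi(F)$ is the map $\sD(d,d')\rtarr\ul{\mathbf{Fun}(\sE,\sN)}(F(d,-),F(d',-))$ obtained by adjointing the structure map of $F$ across the end-formula for the enriched hom of functor categories, namely $\ul{\mathbf{Fun}(\sE,\sN)}(X,Y)=\int_{e}\ul{\sN}(X(e),Y(e))$. The key computation is that the universal property of this end converts a map into it into a $\sV$-natural family of maps $\sD(d,d')\otimes\sE(e,e')\rtarr\ul{\sN}(F(d,e),F(d',e'))$, which is exactly the data of $F$ restructured. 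Running the same manipulation backwards gives the inverse $\Psi$, so $\Phi$ and $\Psi$ are mutually inverse on objects; I would then verify that each is itself a $\sV$-functor and that they are inverse as $\sV$-functors, which is again a matter of tracing the end/adjunction identities and invoking the symmetry and associativity coherence of $\otimes$ in $\sV$.

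For the model-categorical refinement, I would note that when $\sN$ carries a $\sV$-model structure, the level structures on all three functor categories are defined objectwise: weak equivalences and fibrations are detected by evaluation at each object. Under the isomorphism just constructed, evaluation of $\Phi(F)$ at $d$ and then at $e$ is precisely evaluation of $F$ at $(d,e)$, so a map of functors $\sD\otimes\sE\rtarr\sN$ is a level equivalence (resp.\ level fibration) if and only if its image under $\Phi$ is a level equivalence (resp.\ level fibration) of functors $\sD\rtarr\mathbf{Fun}(\sE,\sN)$. Since the cofibrations in each level structure are determined by the lifting property against acyclic fibrations, they correspond as well, and the isomorphism of underlying $\sV$-categories therefore upgrades to an isomorphism of $\sV$-model categories.

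The main obstacle I anticipate is the bookkeeping in the enriched currying isomorphism: one must be careful that the structure maps of $F$, after adjoining across the end defining $\ul{\mathbf{Fun}(\sE,\sN)}$, assemble into a genuine $\sV$-functor $\Phi(F)$ rather than merely a collection of objects and maps, i.e.\ that $\Phi(F)$ respects composition and units in $\sD$. This reduces to checking that the composition map of $\sD\otimes\sE$ decomposes compatibly with the two ends involved, which is a diagram chase using the interchange and symmetry coherences of the monoidal structure on $\sV$; it is routine but is the one place where the coherence isomorphisms must be tracked explicitly. Everything else, including the passage to model structures, is formal once this enriched exponential law is in hand.
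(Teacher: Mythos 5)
Your proposal is correct and follows essentially the same route as the paper: the enriched exponential law given by currying on objects, with the hom objects of the functor categories compared via their end/equalizer presentations, and the model structures matched because the level classes are detected objectwise and evaluation at $(d,e)$ corresponds to evaluation at $d$ then $e$. The paper's proof is just a terser version of the same argument, noting in addition that the $\sE$-functoriality of each $X_d$ comes from precomposing the structure map of $X$ with the unit $\mathbf{I}\rtarr\sD(d,d)$.
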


Now return to the equivariant context. Suppose that we have a $\sV$-model category 
$\sM$ with $\sV$-functorial factorizations together with a $\sV$-functor
$\de\colon \sD\rtarr \sM$ that gives rise to a Quillen equivalence {$\mathbf{Pre} (\sD,\sV) \xymatrix@1 { \ar@<.4ex>[r] & \ar@<.4ex>[l] } \sM $}.\footnote{Defining the adjunction
$(\bT,\bU)$ when $\de$ is not necessarily the inclusion of a full subcategory presents no difficulty.  See  \cite[\S 1]{GM} and \S\ref{TU} below.} Conditions 
ensuring this are discussed in \cite[\S1]{GM}, in answer to Questions 0.2 and 0.3 there.  Retaining the assumptions of the previous section,
for any family of subgroups $\sF$ we also have a Quillen equivalence 
{$\mathbf{Fun} (\sO_{\sF}^{op},\sM) \xymatrix@1 { \ar@<.4ex>[r] & \ar@<.4ex>[l] } G\sM $.}    These give a composite 
Quillen equivalence
{\[\xymatrix@1 {\mathbf{Fun} (\sO_{\sF}^{op},\mathbf{Pre} (\sD,\sV)) \ar@<.4ex>[r] & \ar@<.4ex>[l] \mathbf{Fun} (\sO_{\sF}^{op},\sM)  \ar@<.4ex>[r] & \ar@<.4ex>[l] G\sM  }   \]}
since the following lemma ensures that the {first pair} is a Quillen equivalence. 

\begin{lem} Suppose that $L \colon \sM \xymatrix@1 { \ar@<.4ex>[r] & \ar@<.4ex>[l] } \sN \colon R$ is a Quillen $\sV$-equivalence and that $\sM$ has $\sV$-functorial factorizations. Let $\sO$ be a small $\sV$-category, and suppose that $\mathbf{Fun}(\sO,\sM)$ and $\mathbf{Fun}(\sO,\sN)$ have projective $\sV$-model structures. Then the functors $L_* : \mathbf{Fun}(\sO,\sM) \xymatrix@1 { \ar@<.4ex>[r] & \ar@<.4ex>[l] } \mathbf{Fun}(\sO,\sN) : R_*$  induced by composition give a Quillen $\sV$-equivalence.
\end{lem}
\begin{proof} It is clear that $(L_* , R_*)$ is a Quillen $\sV$-adjunction. To show it is a
Quillen equivalence, it suffices to verify Quillen's condition  that for every cofibrant $X \in \mathbf{Fun}(\sO,\sM)$ and fibrant $Y \in \mathbf{Fun}(\sO,\sN)$, a map $f \colon  L_*X \rtarr Y$ is a weak equivalence if and only if its adjoint $\tilde{f} \colon X \rtarr  R_*Y$ is a weak equivalence.

It is often the case that the cofibrant objects in $\mathbf{Fun}(\sO,\sM)$ are levelwise cofibrant, and then the Quillen condition for $(L_* , R_*)$ follows by applying the Quillen condition for $(L , R)$ levelwise (as in \cite[11.6.5]{Hirschhorn}).  That argument applies when $\sF$ is $\sM$-good, but in fact essentially the same argument still works
in general.  By \cite[\S 45.1]{DHKS}, it suffices to check the Quillen condition on \emph{any} left $L_*$-deformation\footnote{For a functor $\Phi: \sM\rtarr \sN$ of homotopical categories, a left $\Phi$-deformation is an equivalence-preserving functor $r:\sM\rtarr \sM$ together with a natural weak equivalence $r\xrightarrow{\sim} \id_{\sM}$ such that $\Phi$ preserves those weak equivalences that are in the image of $r$.}
of the domain and right $R_*$-deformation of the codomain, 
and the objectwise cofibrant $X \in \mathbf{Fun}(\sO,\sM)$ form a left $L_*$-deformation because we have a $\sV$-functorial factorization on $\sM$. 
\end{proof}

\myref{Composite} allows us to rewrite this, giving the following general conclusion.

\begin{thm}\mylabel{neat}  When \myref{GObjAsPrshvs} applies to gives a Quillen equivalence between the 
 $\sF$-model category $G\sM$ and the functor category  $\mathbf{Fun} (\sO_{\sF}^{op},\sM)$, $G\sM$ is also Quillen equivalent to
the presheaf category $\mathbf{Pre} (\sO_{\sF}\otimes\sD, \sV)$. 
\end{thm}

We have a canonical $\sV$-functor
$\tau\colon \sO_{\sF}\otimes\sD\rtarr G\sM$ that sends  the pair
$(G/H,d)$ to $G/H\odot \de d$.  The maps of 
enriched hom objects are given by the tensor bifunctor
\[ \odot\colon \ul{G\sV}(G/H,G/K)\otimes {\sD}(d,e) 
\rtarr
\ul{G\sM}(G/H\odot \de d, G/K\odot \de e). \] 
Let $\sF\sD$ denote the full $\sV$-subcategory of $G\sM$ whose
objects are the $G/H\odot \de d$ with $H\in \sF$.  Since $\ta$
lands in $\sF\sD$, it specifies a $\sV$-functor
\[ \ta\colon \sO_{\sF}\otimes\sD \rtarr \sF\sD.\]
Even when $\de$ is the inclusion of a full subcategory, it is unclear to us whether or not $\ta$ is a weak equivalence 
in the sense defined in \cite[Definition~2.3(iii)]{GM}. 

In any case, this is an important example where the domain of the presheaf category that arises most naturally 
in answering \cite[Question 0.2 or 0.4]{GM} is not a full $\sV$-subcategory. 
We have Quillen adjunctions of $\sF$-model categories
\[ \xymatrix@1{
\mathbf{Pre} (\sO_{\sF}\otimes\sD,\sV) \ar@<0.4ex>[r]^-{\ta_*} & \mathbf{Pre} (\sF\sD,\sV) \ar@<0.4ex>[l]^-{\ta^*}}  
\ \ \
\text{and} \ \ \ 
\xymatrix@1{\mathbf{Pre} (\sF\sD,\sV) \ar@<0.4ex>[r]^-{\bT} & G\sM \ar@<0.4ex>[l]^-{\bU}.}\\ \]
A check of definitions using (\ref{keyGthree}) shows that the 
composite Quillen adjunction is the Quillen equivalence of \myref{neat}, 
but we do not know whether or not these Quillen adjunctions themselves 
can also be expected to be Quillen equivalences.

\section{Modules over cocommutative DG Hopf algebras}\label{Hopfalg}

\subsection{The general context}  Let $R$ be a commutative ring.  We specialize the general theory to the
category $\sV = R\text{-}\bf{Mod}$ of $\bZ$-graded chain complexes over $R$. 
It is a cosmos with product $\otimes$, unit $R$, and internal hom  $\Hom_R$. 
Differentials lower degree; replacing $X_n$ by $X^{-n}$ would reverse this convention. 
To avoid distraction, the reader may prefer to restrict $R$ to be a field.

In this setting, a Hopf group $A$ in $\sV$ is a cocommutative differential graded 
$R$-Hopf algebra.\footnote{We change notation from $G$ to $A$
for psychological rather than mathematical reasons.}  From the point of view
of \S2 and \S3, we take $\sV = \sM$ to be $R\text{-}\bf{Mod}$.  Then $G\sV$ 
becomes the category $A\text{-}\bf{Mod}$ of left DG $A$-modules.  
Note that we have successively simpler cases where we take
the differential on $A$ to be trivial and when we take $A$ to be concentrated 
in degree $0$.  We can specialize by taking $A$ to be the group  ring of a 
discrete group, but our context is vastly more general.  

We give $R\text{-}\bf{Mod} $ the model structure whose weak equivalences, fibrations, 
and cofibrations are the quasi-isomorphisms, the degreewise epimorphisms, 
and the degreewise split monomorphisms with cofibrant cokernel.  Cofibrant 
objects are degreewise projective, and the converse holds for bounded below 
objects.  This model structure is compactly generated.  Canonical generating
sets $\mathcal{I}_R$ and $\mathcal{J}_R$ are given by the inclusions 
$S^{n-1}_R\rtarr D^n_R$ and $0\rtarr D^n_R$ for $n\in \bZ$.  Here $S^n_R$ is
$R$-free on one generator of degree $n$, with zero differential, and $D^n_R$ 
is $R$-free on generators of degrees $n$ and $n-1$, with $d_n = \id$. See
for example \cite[\S18.4]{MP} for details and alternative model structures.  

Ignoring the Hopf algebra structure and thus generalizing to DGAs over $R$,
we can give $A\text{-}\bf{Mod}$ the model structure whose weak equivalences and fibrations 
are the maps which are weak equivalences and fibrations when regarded as
maps in $R\text{-}\bf{Mod} $.  That is, we take the model structure induced by the 
underlying $R$-module functor $\bR \colon A\text{-}\textbf{Mod}\rtarr R\text{-}\textbf{Mod} $. Then $(\bF , \bR)$ is a 
Quillen adjunction, where $\bF\colon R\text{-}\textbf{Mod} \rtarr R\text{-}\textbf{Mod}$ is the extension
of scalars functor that sends $X$ to $A\otimes_R X$.  This model structure
is also compactly generated. Generating sets $\mathcal{I}_A$ and 
$\mathcal{J}_A$ are obtained by applying $\bF$
to the maps in $\mathcal{I}_R$ and $\mathcal{J}_R$. Other model structures 
defined in \cite{BMR} could also be used.  From the point of view of this paper,
we are here taking $\sF=\{e\}$ and thus describing naive or Borel equivariant homotopy theory,
for which the Hopf algebra structure is irrelevant.

Now return to our Hopf group $A$.  As in \S2 and \S3, we have the categories
	\begin{itemize}
		\item{}$A\text{-}\bf{Mod}$ of $A$-modules and $A$-maps
		\item{}$(R\text{-}\bf{Mod})_A$ of $A$-modules and $R$-maps, with $A$ acting by conjugation
	\end{itemize}
Since the $\Hom_A(M,N)$ are chain complexes of $R$-modules, $A\text{-}\bf{Mod}$ 
is enriched over $R\text{-}\bf{Mod}$,  and  $(R\text{-}\bf{Mod})_A$ is enriched over $A\text{-}\bf{Mod}$.

Let $\sF$ be a set, not necessarily a family, of sub-Hopf algebras $B$ of $A$ that contains $R = S^0_R$. Let $\sO_{\sF}$ be the full 
subcategory of $A$-$\bf{Mod} $ whose objects are the ``orbits'' $A/\!/B = A \otimes_B R$ for $B \in \sF$.  
The $A$-module
$A/\!/B$ is $A/A\cdot IB$ where $IB = \Ker(\epz\colon B\rtarr R)$ is the augmentation ideal of $B$.   Thus we set $ab = 0$ if $\text{deg} (b) \neq 0$
and $ab = a \epz (b)$ if $\text{deg} (b) = 0$. 

\begin{rem}\mylabel{RGood} If $R$ is a field and $A$ is concentrated in nonnegative degrees, then $(A/\!/B)^C$ is also concentrated in nonnegative degrees for any sub-Hopf algebras 
$B,C \subset A$. Since we are working over a field, they are all cofibrant in $R\text{-}\mathbf{Mod}$.  Therefore every $\sF$ is $A\text{-}\mathbf{Mod}$-good.
\end{rem}

The general theory specializes to 
give results analogous to those in the  topological context of \S\ref{GTop}, but we now need the more general context that we have
developed to deal with the $R$-$\bf{Mod} $-category $A$-$\bf{Mod}$. We shall prove a version of theorem \ref{GObjAsPrshvs} for 
categories of modules over general $A$.

\subsection{Colimits and passage to fixed points}

In order to set up model structures and to establish the Quillen equivalence between $\mathbf{Pre}(\sO_\sF,R\text{-}\mathbf{Mod})$ and $A\text{-}\mathbf{Mod}$, we must show that taking fixed points commutes with enough colimits. We first describe the ``fixed points'' of an $A$-module $M$. For any sub Hopf-algebra $B \subset A$, we have
\[ M^B = \{ m \, | \, (b-\epz b)m = 0 \ \ \text{for all} \ \ b\in B\}. \]
Thus  $(M^B)_n$ is the intersection of the kernels of the maps $M_{n} \rtarr M_{n+p}$ given by multiplication by $b-\epz b$ for
$b\in B_{p}$.   Of course, $\epz b = 0$ unless $b\in B_0$. 

\begin{lem} Let $A$ be a cocommutative DG Hopf algebra over $R$. For any sub-Hopf algebra $B \subset A$, the fixed point functor $(-)^B : A\text{-}\mathbf{Mod} \rtarr R\text{-}\mathbf{Mod}$ preserves arbitrary coproducts, and sequential colimits of monomorphisms (indexed over any ordinal).
\end{lem}

The proof is straightforward.  Note that coproducts here are direct sums, whereas sequential colimits are constructed by taking colimits of underlying sets. 

For $M\in A\text{-}\mathbf{Mod}$ and $N\in R\text{-}\mathbf{Mod}$, we have 
$$(M\otimes \epz^*N)^B \iso M^B\otimes N$$ 
when $N$ is a degreewise free $R$-module, such as the domain or codomain of a generating cofibration or acyclic cofibration.  In particular, we have the following observation.

\begin{lem}Let $A$ be a cocommutative DG Hopf algebra over $R$. For any sub-Hopf algebras $B, C \subset A$, and for any integer $n \in \bZ$, we have isomorphisms
	\[
	(A/\!/B \otimes S^n_R)^C \cong (A/\!/B)^C \otimes S^n_R \quad\text{and}\quad (A/\!/B \otimes D^n_R)^C \cong (A/\!/B)^C \otimes D^n_R.
	\]
\end{lem}

We now consider pushouts. In general, commuting $(-)^B$ with pushouts in $A\text{-}\mathbf{Mod}$ is difficult, but we need only consider pushouts where one leg is a generating cofibration $A/\!/B \otimes S^{m-1}_R \rtarr A/\!/B \otimes D^m_R$ since relative cell complexes can be constructed one cell at a time.

\begin{lem} Let $A$ be a cocommutative DG Hopf algebra over $R$.  If the diagram
	\begin{center}
		\begin{tikzpicture}[node distance=2cm]
			\node(S) {$A/\!/B \otimes S^{m-1}_R$}; 
			\node(D) [below of=S] {$A/\!/B \otimes D^m_R$}; 
			\node(X) [right of=S] {$X$};
			\node(Y) [below of=X] {$Y$};
			\path[->]
			(S) edge [left] node {$i$} (D)
			(S) edge [above] node {$f$} (X)
			(D) edge [below] node {$g$} (Y)
			(X) edge [right] node {$j$} (Y)
			;
		\end{tikzpicture}
	\end{center}
is a pushout in $A\text{-}\mathbf{Mod}$, then applying the functor $(-)^C$ to it gives a pushout in $R\text{-}\mathbf{Mod}$  for any sub-Hopf algebra $C$ of $A$. 
\end{lem}

\begin{proof}
Fix an integer $n \in \bZ$ and consider what happens in degree $n$. Writing $R_k$ for a copy of $R$ in degree $k$, we see that this pushout square can be identified with
	\begin{center}
		\begin{tikzpicture}[node distance=2cm]
			\node(S) {$(A/\!/B)_{n-m+1} \otimes R_{m-1}$};
			\node(D) [below of=S] {$\Big( (A/\!/B)_{n-m+1} \otimes R_{m-1} \Big) \oplus \Big( (A/\!/B)_{n-m} \otimes R_m \Big)$};
			\node(X) at (6.8,0) {$X_n$};
			\node(Y) [below of=X] {$X_n \oplus \Big( (A/\!/B)_{n-m} \otimes R_m \Big)$};
			\path[->]
			(S) edge [left] node {} (D)
			(S) edge [above] node {$f_n$} (X)
			(D) edge [below] node {$f_n \oplus \text{id}$} (Y)
			(X) edge [right] node {} (Y)
			;
		\end{tikzpicture}
	\end{center}
where the vertical maps are the inclusions of summands. Moreover, the $A$-action respects both splittings, since the original pushout was taken in $A\text{-}\mathbf{Mod}$. Thus, this pushout is preserved when we pass to the square of submodules on which $C$ acts through the augmentation, and therefore the ``fixed point'' functor $(-)^C$ preserves the pushout of $A$-modules.
\end{proof}

\begin{rem}We have an analogous result 
when the left leg is the (unique) map $A/\!/B \otimes 0 \rtarr A/\!/B \otimes D^m_R$, because in such a case, $Y \cong (A/\!/B \otimes D^m_R) \oplus X$ and we already know that $(-)^C$ preserves direct sums.
\end{rem}

\subsection{Model categorical results}\label{subsubsec:GHom}

We prove theorem \ref{GObjAsPrshvs} for $A$-modules. Let  $\bT : \mathbf{Pre}(\sO_\sF,R\text{-}\mathbf{Mod}) \rightleftarrows A\text{-}\mathbf{Mod} : \bU$
be the usual adjunction, and recall the definitions of the level $\sF$-model structure on $\mathbf{Pre}(\sO_\sF,R\text{-}\mathbf{Mod})$ and the $\sF$-model structure on $A\text{-}\mathbf{Mod}$ given in section \ref{GMModel}.

\begin{thm}\label{thm:GHompresh} The level $\sF$-equivalences, level $\sF$-fibrations, and the resulting 
cofibrations give $\mathbf{Pre}(\sO_{\sF}, R\text{-}\mathbf{Mod})$ a compactly generated, right proper $R\text{-}\bf{Mod}$-model
structure with generating cofibrations $F_{\sF}\mathcal{I}_R$ 
and generating acyclic cofibrations $F_{\sF}\mathcal{J}_R$. If $\sF$ is a good set of sub-Hopf algebras, then 
$\mathbf{Pre}(\sO_\sF,R\text{-}\mathbf{Mod})$ is also left proper and every cofibration in 
$\mathbf{Pre}(\sO_\sF,R\text{-}\mathbf{Mod})$ is a levelwise cofibration.
\end{thm}

\begin{proof} To show the model structure exists, use the adjunction $(\bU(A/\!/B) \otimes (-) , \text{ev}_{A/\!/B})$ and the fact that $\text{ev}_{A/\!/B}$ preserves colimits to reduce to the smallness of $S^{m}_R$ and $0$ with respect to relative $\{ (A/\!/B)^C \otimes i \}$ and $\{ (A/\!/B)^C \otimes j\}$-cell complexes, and the acyclicity condition for relative $\{ (A/\!/B)^C \otimes j\}$-cell complexes in $R\text{-}\mathbf{Mod}$.
\end{proof}

\begin{thm}The $\sF$-equivalences, $\sF$-fibrations, and the resulting cofibrations
give $A$-$\bf{Mod}$ a compactly generated, right proper $R$-$\bf{Mod} $-model category structure with generating cofibrations $\sF\mathcal{I}_R$ and generating acyclic cofibrations $\sF\mathcal{J}_R$. If $\sF$ is a good set of sub-Hopf algebras, then $A\text{-}\mathbf{Mod}$ is also left proper, and the functors $(-)^B$ preserve cofibrations. 
\end{thm}

\begin{proof}The argument is similar to the proof of theorem \ref{thm:GHompresh}. One uses the adjunction $(A/\!/B \otimes (-) , (-)^B)$ and the non-formal fact that $(-)^B$ preserves sequential colimits of monomorphisms to reduce the smallness of $\sF\mathcal{I}_R$ and $\sF\mathcal{J}_R$ to the smallness of $S^m_R$ and $0$. Then, since $\bU$ takes relative $\sF\mathcal{J}_R$-cell complexes to relative $F_\sF \mathcal{J}_R$-cell complexes, the acyclicity condition for $\sF\mathcal{J}_R$ is inherited from $F_\sF \mathcal{J}_R$.
\end{proof}

\begin{rem}  For left properness, recall from \myref{RGood} that the goodness hypothesis in the above results holds
quite generally.
\end{rem}

\begin{rem}Since $A\text{-}\mathbf{Mod}$ is a cosmos, one may ask whether it is a monoidal model category. We do not believe this is true in general, but it is true when $A$ is commutative and we use $\sF = \{R\}$ \cite[Theorem 3.3]{BMR}. 
\end{rem}

\begin{thm} \label{thm:DGHopfpresh} The functors
	\[
	\xymatrix@1{\mathbf{Pre} ( \sO_{\sF},R\text{-}\bf{Mod})\ar@<.4ex>[r]^-{\bT} & A\text{-}\bf{Mod} \ar@<.4ex>[l]^-{\bU}\\}
	\]
give an $R\text{-}\mathbf{Mod}$-enriched Quillen equivalence.
\end{thm}

\begin{proof} The usual argument for spaces works verbatim.
\end{proof}

\begin{rem} We can generalize the theory of this section by demanding an action of a discrete group $G$ on our Hopf
group $A$ through automorphisms of algebras and using twisted modules (e.g. \cite{GM3, GMM}).  A quite different
generalization would replace equivariant DG algebras by equivariant DG categories.  This section can be viewed as a 
modest contribution to the nascent field of equivariant homological algebra.
\end{rem}

\section{Equivariant simplicial model categories}\label{secsset}

Since simplicial enrichment is the one most commonly used, we would be remiss 
not to show how our theory applies to equivariant simplicial model categories.
Here we take $\sV$ to be the closed cartesian monoidal category $\mathbf{sSet}$ of simplicial sets.
We give $\mathbf{sSet}$ its usual Quillen model structure and write 
$\mathcal{I} = \{ \partial\Delta^n \rtarr \Delta^n \}$ and $\mathcal{J} = \{ \Lambda^n_k \rtarr \Delta^n\}$ for its sets of generating cofibrations and acyclic cofibrations; other choices 
are possible. We take $G$ to be a  simplicial group. Less generally, we can take a group $G$ and regard it as a 
discrete simplicial group, that is, a discrete group regarded as a constant simplicial set; according to our general theory, it should be denoted $I[G]$.

Let $\sF$ be a set, not necessarily a family, of simplicial subgroups $H$ of $G$ that contains $e$. Let $\sO_\sF$ be the full simplicial subcategory of 
$G\mathbf{sSet}$ whose objects are the orbits $G/H = G \times_H *$ for $H \in \sF$. Since every simplicial set is cofibrant, every such set of subgroups
is automatically $\bf{sSet}$-good.   Since the colimits appearing in the definition of $\sF$-cell complexes are colimits of inclusions, $\sF$ is very 
$\bf{sSet}$-good when passage to fixed points preserves inclusions.  We shall see that this always holds when $G$ is discrete.  As usual, the most 
interesting examples 
of $\sF$ are $\sA \ell \ell$ and $\{e\}$. 

For now, we take $\sM = \sV = \mathbf{sSet}$, and consider the actions of $G$ on simplicial sets $T$. We shall consider more general $\sM$ in \S\ref{Mouch}.  We have the $\mathbf{sSet}$-category $G\mathbf{sSet}$ of $G$-actions and $G$-maps and the $G\mathbf{sSet}$-category $\mathbf{sSet}_G$ of $G$-actions and all simplicial maps, with $G$ acting by conjugation on hom objects. Then 
$$\ul{\mathbf{sSet}}_G(T,T')^G \cong \ul{G\mathbf{sSet}}(T,T').$$

\subsection{Colimits and passage to fixed points}

As usual, we consider the interaction between colimits and fixed-point functors. We begin with a description of the simplices of $T^H$ for a $G$-action $T$ and a simplicial subgroup $H \subset G$.
	\[
	(T^H)_n = \left\{ x \in T_n \, \Big| \,   \phi^*(x) \in (T_q)^{H_q} 
	\text{ for all } q \geq 0 \text{ and } \phi \colon  [q] \rtarr [n]  \right\} 
	\]
Specializing to $q=n$ and $\phi = \text{id} : [n] \rtarr [n]$, we see that $(T^H)_n \subset (T_n)^{H_n}$, but equality need not hold in general. Indeed, if $x \in (T_n)^{H_n}$, then for every $\phi : [q] \rtarr [n]$, we are guaranteed that $\phi^*(x) \in (T_q)^{\phi^*(H_n)}$, but elements of $H_q \setminus \phi^*(H_n)$ need not fix $\phi^*(x)$. However, the equality $(T^H)_n = (T_n)^{H_n}$ does hold if all $\phi^*$ for $G$ are surjective. In particular, this is true if $G$ is a discrete simplicial group, and in this case 
$$\ul{G\mathbf{sSet}}(G/K,G/H) = [G/H]^K,$$
regarded as a constant simplicial set.

The following lemma is straightforward.

\begin{lem} Let $G$ be an arbitrary simplicial group. For any simplicial subgroup $H \subset G$, the fixed point functor $(-)^H : G\mathbf{sSet} \rtarr \mathbf{sSet}$ preserves coproducts and sequential colimits of monomorphisms (indexed over any ordinal). Moreover, for any simplicial set $X$ regarded as a $G$-trivial $G$-simplicial set, 
$$(T \times X)^H \cong T^H \times X.$$
\end{lem}

We also have the following analogue to Lewis' observation for $G$-spaces.

\begin{lem} Let $G$ be a discrete simplicial group. If 
	\begin{center}
		\begin{tikzpicture}[node distance=2cm]
			\node(a) {$A$};
			\node(b) [below of=a] {$B$};
			\node(x) [right of=a] {$X$};
			\node(y) [below of=x] {$Y$};
			\path[->]
			(a) edge [above] node {$f$} (x)
			(x) edge [right] node {$j$} (y)
			(b) edge [below] node {$g$} (y)
			;
			\path[>->]
			(a) edge [left] node {$i$} (b)
			;
		\end{tikzpicture}
	\end{center}
is a pushout in $G\mathbf{sSet}$ in which $i$ is a monomorphism, then the square obtained by applying $(-)^H$ is a 
pushout in $\mathbf{sSet}$ for any subgroup $H$ of $G$.
\end{lem}
\begin{proof} In every dimension $n \geq 0$, we have a splitting $Y_n \cong (B_n \setminus A_n) \sqcup X_n$. The action of $H_n = H$ respects this splitting, but the simplicial structure on $Y$ does not. However, since $H$ is discrete, it follows that $(T^H)_n = T_n^{H_n} = T_n^H$, and hence
	\[
	(Y^H)_n \cong Y_n^{H} \cong (B_n^{H} \setminus A_n^{H}) \sqcup X_n^{H} = \Big( (B^H)_n \setminus (A^H)_n \Big) \sqcup (X^H)_n. \qedhere
	\]
\end{proof}

\begin{ouch0}\mylabel{Nono} We  indicate what can go wrong in the preceding argument when $G$ is not discrete, using a specific example. Consider the problem of attaching an equivariant $1$-cell to a point,
	\begin{center}
		\begin{tikzpicture}[node distance=2cm]
			\node(a) {$G/H \times \partial \Delta^1$};
			\node(b) [below of=a] {$G/H \times \Delta^1$};
			\node(x) [right of=a] {$*$};
			\node(y) [below of=x] {$Y$};
			\path[->]
			(a) edge [above] node {} (x)
			(x) edge [right] node {} (y)
			(b) edge [below] node {} (y)
			;
			\path[>->]
			(a) edge [left] node {$i$} (b)
			;
		\end{tikzpicture}
	\end{center}
and look at the $1$-simplices of $Y$. The splitting of $Y_1$ is $Y_1 \cong ({G_1/H_1} \times \{ \text{id} \} ) \sqcup *$, where $\text{id} : [1] \rtarr [1]$ in the simplex category. Now let $K$ be a simplicial subgroup of $G$, and take $Y^K$. Then $(Y^K)_1$ consists of the point $*$ and those pairs $(gH_1 , \text{id})$ such that $\phi^*(gH_1) \in (G_{k+1}/H_{k+1})^{K_{k+1}}$ for every iterated degeneracy map 
$$\phi = s_{i_1} \circ \cdots \circ s_{i_k} \colon  [k+1] \rtarr [1].$$ Indeed, any simplicial operator $\phi$ with target
$[1]$ can be written as a composite $\phi = d_{j_1} \circ \cdots \circ d_{j_l} \circ s_{i_1} \circ \cdots \circ s_{i_k}$. If any face map $d_j$ appears in this decomposition, then $\phi^*(gH_1,\text{id}) = * $ and hence $\phi^*(gH_1,\text{id})$ is automatically in $Y_q^{K_q}$. If no $d_j$ appears, then $\phi^*(gH_1,\text{id})$ is in $Y_q^{K_q}$ if and only if  
$\phi^*(gH_1)$ is in $(G_q/H_q)^{K_q}$ since $\phi$ preserves the complement of the image of the inclusion $i : G/H \times \partial \Delta^1 \rtarr G/H \times \Delta^1$.

On the other hand, the set $(G/H \times \Delta^1)^K_1 \setminus (G/H \times \partial\Delta^1)^K_1$ consists of those pairs $(gH_1 , \text{id})$ such that for \emph{any}  $\phi : [q] \rtarr [1]$, we have $\phi^*(gH_1) \in (G_q/H_q)^{K_q}$. It follows that we have an inclusion $\Big( (G/H \times \Delta^1)^K_1 \setminus (G/H \times \partial\Delta^1)^K_1 \Big) \coprod  *^K_1 \subset (Y^K)_1$, and by making suitable choices of $G$, $H$, and $K$, we can make it a proper inclusion.
\end{ouch0}

\subsection{Model categorical results}

We now prove an analogue of  \myref{GObjAsPrshvs} for a discrete simplicial group $G$. Let 
$$\bT \colon \mathbf{Fun}(\sO^{op}_\sF,\ul{\mathbf{sSet}}) \rightleftarrows G\mathbf{sSet} \colon \bU$$ 
be the usual adjunction, and recall the definitions of the level $\sF$-model structure on $\mathbf{Fun}(\sO^{op}_\sF,\ul{\mathbf{sSet}})$ and the $\sF$-model structure on $G\mathbf{sSet}$ given in \S\ref{GMModel}. The proofs of the following statements are essentially identical to those given in \S\ref{subsubsec:GHom}.

\begin{thm}Suppose $G$ is a discrete simplicial group. The level $\sF$-equivalences, level $\sF$-fibrations, and the resulting cofibrations give $\mathbf{Fun}(\sO^{op}_\sF , \ul{\mathbf{sSet}})$ a compactly generated, proper simplicial model structure with generating cofibrations $F_\sF \mathcal{I}$ and generating acyclic cofibrations $F_\sF \mathcal{J}$. Every cofibration of $\mathbf{Fun}(\sO^{op}_\sF,\ul{\mathbf{sSet}})$ is a levelwise cofibration.
\end{thm}

Observe that every cofibration in $G\mathbf{sSet}$ is a monomorphism, and hence $\bU$ preserves pushouts, provided one of its legs is a cofibration. Thus, we have the following result.

\begin{thm} Suppose $G$ is a discrete simplicial group. The $\sF$-equivalences, $\sF$-fibrations, and the resulting cofibrations give $G\mathbf{sSet}$ a compactly generated, proper simplicial model structure, with generating cofibrations $\sF\mathcal{I}$ and generating acyclic cofibrations $\sF\mathcal{J}$. Moreover, the functors $(-)^H$ preserve cofibrations.
\end{thm}

We do not believe that $G\mathbf{sSet}$ is a monoidal model category in general.

\begin{thm}Suppose that $G$ is a discrete simplicial group. Then the functors
	\[
	\xymatrix@1{\mathbf{Fun} ( \sO^{op}_{\sF},\ul{\mathbf{sSet}})\ar@<.4ex>[r]^-{\bT} &G {\mathbf{sSet}} \ar@<.4ex>[l]^-{\bU}\\}
	\]
are a simplicial Quillen equivalence.
\end{thm}

\begin{rem}  For a general simplicial group $G$ one might try to prove the preceding theorem using a bar construction, as in Elmendorf's argument  \cite{E}. In such a proof, one would have to prove that taking $H$-fixed points commutes with geometric realization, and this again runs into problems. Indeed, $(-)^H$ is an \emph{enriched} right Kan extension, and while the geometric realization functor for bisimplicial sets is isomorphic to pullback along the diagonal $\Delta \rtarr \Delta \times \Delta$ as an \emph{unenriched} functor, this is not true when we enrich.
\end{rem}

\subsection{Actions in more general simplicial model categories $\sM$}\label{Mouch}

We now partially generalize some of our results for simplicial sets to categories $\sM$ enriched over simplicial sets. We require some rather annoying assumptions to do this.   The reason is that orbits and fixed points are 
colimits and limits.  Commuting them in special cases is central to our philosophy, and things that work trivially for simplicial sets cannot be expected to work at all in general categories enriched in simplicial sets.

\begin{asses}\mylabel{assump:simpl} We assume the following conditions.
	\begin{enumerate}[(i)]
		\item{}$G$ is a finite group, regarded as a discrete simplicial group.
		\item{}$\sM$ is a locally finitely presentable, cofibrantly generated, simplicial model category with generating cofibrations $\mathcal{I}$ and generating acyclic cofibrations $\mathcal{J}$.
		\item{}the Lewis condition holds: for any subgroup $H \in \sF$, the fixed point functor $(-)^H : G\sM \rtarr \sM$ preserves pushouts one leg of which is in $\sF \mathcal{I}$ or $\sF \mathcal{J}$.
		\item{}The following two conditions hold:
			\begin{itemize}
				\item[(a)] finite coproducts in $\sM$ are ``disjoint'': for any finite set $I$ and objects $(M_i)_{i \in I}$ in $\sM$, each inclusion $M_i \rtarr \coprod_{i \in I} M_i$ is a monomorphism, and if $i \neq j$, then the pullback (``intersection'')  of $M_i \rtarr \coprod_{i \in I} M_i$ and $M_j \rtarr \coprod_{i \in I} M_i$ is the initial object in $\sM$, and
				\item[(b)]  equalizers split over finite coproducts in $\sM$: for any finite set $I$, objects $(A_i)_{i \in I}$ and $B$ in $\sM$, and morphisms $f_i : A_i \rtarr B$ and $g_i : A_i \rtarr B$, the dashed map in the diagram below
					\begin{center}
		\begin{tikzpicture}
			\node(A) {$\coprod_{i \in I} \text{eq}(f_i,g_i)$};
			\node(B) at (6,0) {$\text{eq}([f_i]_{i \in I} , [g_i]_{i \in I})$};
			\node(C) at (3,-1.5) {$\coprod_{i \in I} A_i$};
			\node(D) at (3,-3) {$B$};
			\path[dashed, ->]
			(A) edge [above] node {$\cong$} (B)
			;
			\path[->]
			(A) edge [below left] node {} (C)
			(B) edge [below right] node {} (C)
			(C.253) edge [left] node {$[f_i]_{i \in I}$} (D.110)
			(C.286) edge [right] node {$[g_i]_{i \in I}$} (D.70)
			;
		\end{tikzpicture}
					\end{center}
				is an isomorphism.
			\end{itemize}
	\end{enumerate}
\end{asses}

\begin{rem}Condition (iv.a)  holds in the most interesting cases, but a counterexample can be obtained by considering the coproduct of $\bF_2$ and $\bF_3$ in the category of commutative rings. On the other hand, we only expect condition (iv.b) to hold in sufficiently ``space-like'' categories. It fails for modules over a ring $R$: consider the equalizer of the identity and twist maps $\t{id}, \gamma : A \oplus A \xymatrix@1 { \ar@<.4ex>[r] \ar@<-.4ex>[r] & } A \oplus A$.
\end{rem}

Under these assumptions and our standard model theoretic assumptions,  we shall prove a version of theorem \ref{GObjAsPrshvs}.

\subsubsection{Colimit preservation lemmas}

The conditions in \myref{assump:simpl} ensure that certain hom functors $\t{hom}(A,-)$ preserve filtered colimits, but we must relate our assumptions to the colimit preservation properties of $(-)^H$. To start, observe that for any 
$M \in G\sM$ and subgroup $H \subset G$:
	\[
	M^H = \text{eq} \Bigg( M \xymatrix@1 { \ar@<.4ex>[r]^-{\Delta} \ar@<-.4ex>[r]_-{(h)}& } \prod_{h \in H} M \Bigg),
	\]
where $(h)$ is  the composite of $\DE$ and the morphism given by multiplication by $h$ on the $h$th copy of  $M$.  This description makes  essential use of the fact that $G$ is discrete.

\begin{lem} The functor $(-)^H : G\sM \rtarr \sM$ preserves directed colimits.
\end{lem}

\begin{proof} The fixed point functor is a finite limit since $H$ is finite, so this holds since finite limits and filtered colimits commute in any locally finitely presentable category.
\end{proof}

Next, we clarify the role of (iv.a) and (iv.b) of \myref{assump:simpl}. Note that since the orbit $G/H$ is also discrete, we have
	\[
	G/H \odot M \cong \coprod_{gH \in G/H} M
	\]
for $M \in \sM$. The $G$-action is obtained by permuting the copies of $M$, and the point is that if coproducts are disjoint and taking fixed points splits over them, then the $K$-fixed summands of $G/H \odot M$ correspond to $K$-fixed elements of $G/H$.

\begin{lem}  Conditions (iv.a) and (iv.b) imply that $(G/H \odot M)^K \cong (G/H)^K \odot M$
for any subgroups $H,K \subset G$ and any object $M\in \sM$. 
\end{lem}

\begin{proof} By (iv.b), we may compute the equalizer $(\coprod_{G/H} N)^K$ one copy of $M$ at a time. 
Write $\iota_{gH} : M \rtarr \coprod_{G/H} M$ for the inclusion, and consider the $gH$-summand. We must compute the equalizer
	\[
	\text{eq} \,\,\,\xymatrix@1 { \ar@{>->}[r]^-{e} & } M \xymatrix@1 { \ar@<.4ex>[r]^-{\langle \iota_{gH} \rangle_k} \ar@<-.4ex>[r]_{\langle \iota_{kgH} \rangle_k} & } \prod_{k \in K} \Big(\coprod_{G/H} M \Big).
	\]
If $gH \in (G/H)^K$, then the two parallel morphisms are equal, and then $\text{eq} \cong M$. If $gH \notin (G/H)^K$, then we may choose $k \in K$ such that $kgH \neq gH$, and after projecting along $\pi_k \colon \prod_{k \in K} \Big(\coprod_{G/H} M \Big) \rtarr \coprod_{G/H} M$, we see that $e : \text{eq} \rtarr M$ must equalize the pair $\iota_{gH} , \iota_{kgH} \colon M \xymatrix@1 { \ar@<.4ex>[r] \ar@<-.4ex>[r] & } \coprod_{G/H} M$. Thus $\text{eq}$ is a subobject of $\iota_{gH} \cap \iota_{kgH} = \varnothing$, hence $\text{eq} \cong \varnothing$.
\end{proof}

\begin{rem}\mylabel{rem:cellconv} In what follows, let us agree to only use cell complexes for which
	\begin{itemize}
		\item{}a single cell is attached at each successor stage, and
		\item{}the final (transfinite) sequential colimit is taken over an infinite ordinal.
	\end{itemize}
With this convention, we see that  \myref{assump:simpl} imply that the fixed point functors $(-)^H : G\sM \to \sM$ preserve all tensors and colimits appearing in the construction of relative $\sF \mathcal{I}$ and relative $\sF \mathcal{J}$-cell complexes.
\end{rem}

\begin{warn} If we had only assumed that $\sM$ is locally presentable, say for some regular cardinal $\lambda > |G|$, then the same line of argument would show that $(-)^H$ preserves $\lambda$-filtered colimits. However, this does not imply that
$(-)^H$  preserves all colimits used to construct relative cell complexes, because when we form transfinite composites, we must take a sequential colimit at every limiting stage.
\end{warn}

\subsubsection{Model categorical results}

Fix a set $\sF$ of subgroups of $G$ that contains $e$. As promised, we prove a version of \myref{GObjAsPrshvs}. Let $(\bT,\bU)$ be the usual adjunction, and recall the definitions of the level $\sF$-model structure on $\mathbf{Fun}(\sO^{op}_\sF,\sM)$ and the $\sF$-model structure on $G\sM$ given in \S\ref{GMModel}. As above, we assume \myref{assump:simpl} and we only use cell complexes as described in \myref{rem:cellconv}.

\begin{thm}The level $\sF$-equivalences, level $\sF$-fibrations, and the resulting cofibrations give $\mathbf{Fun}(\sO^{op}_\sF,\sM)$ a locally finitely presentable, cofibrantly generated, simplicial model structure with generating cofibrations $F_\sF \mathcal{I}$ and generating acyclic cofibrations $F_\sF \mathcal{J}$. Every cofibration of $\mathbf{Fun}(\sO^{op}_\sF,\sM)$ is a levelwise cofibration, and if $\sM$ is left or right proper, then so is $\mathbf{Fun}(\sO^{op}_\sF,\sM)$.
\end{thm}

\begin{proof}  The local finite presentability of $\sM$ implies that the sets $F_\sF \mathcal{I}$ and $F_\sF \mathcal{J}$ admit the small object argument, and since every $(G/H)^K$ is cofibrant in $\mathbf{sSet}$, it follows that $F_{G/H} \odot j$ is a levelwise acyclic cofibration for every $j \in \mathcal{J}$ and $H \in \sF$. Thus, $F_\sF \mathcal{J}$ satisfies the acyclicity condition, and the level $\sF$-model structure exists. To see that the presheaf category inherits local finite presentability, note that the underlying category $\mathbf{Fun}_{\mathbf{sSet}}(\sO^{op}_\sF,\sM)_0$ of $\mathbf{Fun}_{\mathbf{sSet}}(\sO^{op}_\sF,\sM)$ is isomorphic to the ordinary functor category $\mathbf{Fun}_{\mathbf{Set}}( (\sO^{op}_\sF)_0 , \sM_0)$, because $\sO^{op}_\sF$ is discrete. Finally, every set $\sF$ of subgroups is good under simplicial enrichment, hence $\mathbf{Fun}(\sO^{op}_\sF , \sM)$ inherits left and right properness from $\sM$.
\end{proof}

\begin{thm}The $\sF$-equivalences, $\sF$-fibrations, and the resulting cofibrations give $G\sM$ a locally finitely presentable, cofibrantly generated, simplicial model structure with generating cofibrations $\sF \mathcal{I}$ and generating acyclic cofibrations $\sF \mathcal{J}$. Moreover, for every $H \in \sF$, the functor $(-)^H$ preserves cofibrations, and if $\sM$ is left or right proper, then so is $G\sM$.
\end{thm}

\begin{proof} Local finite presentability of $\sM$ and the fact that $(-)^H$ preserves directed colimits imply that $\sF \mathcal{I}$ and $\sF \mathcal{J}$ admit the small object argument. \myref{assump:simpl} guarantee that $\bU$ takes relative $\sF \mathcal{J}$-cell complexes to relative $F_\sF \mathcal{J}$-cell complexes, and hence $\sF\mathcal{J}$ inherits the acyclicity condition from $F_\sF \mathcal{J}$. Thus the $\sF$-model structure exists on $G\sM$. The category 
$G\sM$ inherits local finite presentability and properness from $\sM$ as above.
\end{proof}

\begin{thm} The functors
	\[
	\xymatrix@1 { \mathbf{Fun}(\sO^{op}_\sF , \sM) \ar@<.4ex>[r]^-{\bT} & G\sM \ar@<.4ex>[l]^-{\bU} }
	\]
are a simplicial Quillen equivalence.
\end{thm}

\begin{proof} Given our conventions on cell complexes, the usual argument works.
\end{proof}

\section{Appendix: Categorical explanations and amplifications}\label{Catcat}

\subsection{Closed subgroups}\label{closed}

In homotopy theory, it is standard to restrict attention to closed subgroups.  We give that some categorical perspective and suggest a definition of closed subgroups of Hopf groups.

Recall that a monomorphism $m : X \rtarr Y$ in any category is \emph{regular} if it is an equalizer of some pair of arrows $Y \xymatrix@1 { \ar@<.4ex>[r] \ar@<-.4ex>[r] & } Z$, 
and it is \emph{effective} if it is an equalizer of the pair $Y \xymatrix@1 { \ar@<.4ex>[r] \ar@<-.4ex>[r] & } Y \cup_X Y$. 
Every split monomorphism $m$ is regular since if $r$ is a retraction for $m$, then $m$ is an equalizer of $\t{id}$ and $mr$. 
When working in $\sU$, we have the following categorical identification of the closed inclusions.

\begin{lem}  Let $H$ be a subgroup of a topological group $G$, with the subspace topology. The following are equivalent.
\begin{enumerate}[(i)]
\item $H$ is closed in $G$.
\item The inclusion $\io : H \rtarr G$ is an effective monomorphism in $\sU$.
\item The  inclusion $\io: H \rtarr G$ is a regular monomorphism in $\sU$.
\end{enumerate}
Therefore, if $m\colon H\rtarr G$ is a homomorphism and a regular monomorphism in $\sU$, 
then $m$ is isomorphic in $\sU /G$ to the inclusion of a closed subgroup of $G$.
\end{lem}
\begin{proof} For $(i) \Rightarrow (ii)$, since the equivalence relation $E \subset (G \amalg G)^2$ that defines the quotient $q\colon G \amalg G \rtarr G \cup_H G$ is 
closed in $(G \amalg G)^2$, the quotient in $\sU$ is the set theoretic quotient with the quotient topology.  Therefore the equalizer of 
$G \xymatrix@1 { \ar@<.4ex>[r] \ar@<-.4ex>[r] & } G \cup_H G$ is the inclusion $\io: H \rtarr G$. $(ii) \Rightarrow (iii)$ is clear.
$(iii) \Rightarrow (i)$ since equalizers are computed by pulling back a diagonal and diagonals are closed when we work in $\sU $.
For the last statement, let $\io : m(H) \rtarr  G$ be the inclusion of the image of $m$. Then $m$ factors through $\io$ via a  map 
$\til{m} : H \rtarr m(H)$. Since $m$ is regular, it is an equalizer of some pair $f,g : G \xymatrix@1 { \ar@<.4ex>[r] \ar@<-.4ex>[r] & } X$ in $\sU $. Then, since $\til{m}$ is epimorphic and $\io$ is monomorphic, it follows that $\io : m(H) \,\,\, \xymatrix@1 { \ar@{>->}[r] & } G$ is also an equalizer of $f$ and $g$.  Thus $\io: m(H) \,\,\, \xymatrix@1 { \ar@{>->}[r] & } G$ is a regular monomorphism, 
hence is the inclusion of the closed subgroup $m(H)$ of $G$, and $\til{m}$ is a homeomorphism by the uniqueness of equalizers.
\end{proof}

This motivates the following definition.

\begin{defn}\mylabel{Defnclosed}  A closed inclusion $\io\colon H\rtarr G$ of Hopf groups in $\sV$ is a morphism of Hopf groups which is a regular monomorphism in $\sV$.
\end{defn} 

Note that the inclusion $\eta\colon I\rtarr G$ is closed in this sense since it is split by $\epz$.   If we had only required closed inclusions to be effective,
this would not be automatic. Pedantically, we should only consider subgroups up to equivalence of monomorphisms (each factors through the other).  To stay closer to the classical orbit category and to obviate size issues, we agree to choose representatives of equivalence classes when defining the orbit subcategory $\sO_G$ of $\sV$.

\subsection{Families of subgroups}\label{families}

The categorical notion of a sieve suggests a definition of a family of closed subgroups of a Hopf group. A subcategory {$\sS$} of a category $\sC$ is called a sieve if for any object 
$S\in {\sS}$ and any morphism $f\colon C\rtarr S$ in $\sC$, the object $C$ and the morphism $f$ are in the subcategory {$\sS$}.  It follows that {$\sS$} is a full 
subcategory of $\sC$.  

The use of families of subgroups of a topological group $G$ pervades equivariant homotopy theory.  

\begin{lem}  A set of subgroups of  a topological group $G$ is a family if and only if the orbit category $\sO_{\sF}$ is a sieve in $\sO_G$. 
\end{lem}
\begin{proof}Families $\sF$ are closed under subconjugacy, and subconjugacy relations give all morphisms between orbits.
\end{proof}

\begin{defn}  A {\em family} $\sF$ of subgroups of a Hopf group $G$ in $\sV$ is a nonempty set of closed subgroups $H$ such that the orbits $G/H$ are the objects 
of a sieve. 
\end{defn}

\begin{rem} The sieve condition on a family does not play an important role in our present work, and we anticipate that there will be examples in which the sets $\sF$ of interest will not form a family in the preceding sense. However, it is essential that we work with sets $\sF$ that contain $e$, and that we take $\sO_\sF$ to be a full $\sV$-subcategory of $G\ul{\sV}$. Briefly, these two conditions ensure that:
	\begin{enumerate}[(i)]
		\item{}every $\sF$-equivalence is a nonequivariant weak equivalence,
		\item{}the left adjoint in the adjunction $(\bT,\bU)$ may be identified with the functor that restricts a presheaf $P$ to its $G/e$ component and, most importantly,
		\item{}for any $M \in \sM$, the unit $\eta : F_{G/H} \odot M \rtarr \bU\bT(F_{G/H} \odot M)$ is an isomorphism whenever $\bU$ commutes with $(-) \odot M$.
	\end{enumerate}
\end{rem}

\subsection {Double enrichment of equivariant categories}\label{double}
We assume given a cosmos $\sV$, a bicomplete $\sV$-category $\sM$, and a Hopf group $G$ in $\sV$. 
As indicated briefly earlier, we have doubly enriched categories in this context. We explain the relevant categorical framework in more detail here.\footnote{We use the notational conventions of \cite[16.3]{MP} for enriched categories, 
rather than the notations of the categorical literature as in \cite{Kelly}.}  

We have a $\sV$-functor category $G\sM = \mathbf{Fun}(\ul{G},\sM)$ whose objects are the $\sV$-functors $X, Y : \ul{G} \xymatrix@1 { \ar@<.4ex>[r] \ar@<-.4ex>[r] & } \sM$, and whose $\sV$-object of morphisms from $X$ to $Y$ is  the equalizer
	\[
	\ul{G\sM}(X,Y) : = \text{eq} \Big( \ul{\sM}(X,Y) \xymatrix@1 { \ar@<.4ex>[r] \ar@<-.4ex>[r] & } F(G , \ul{\sM}(X,Y)) \Big).
	\]
The two maps being equalized are obtained by transposing the two actions of $G$ on $\ul{\sM}(X,Y)$, and the morphisms of the underlying $\mathbf{Set}$-enriched category $(G\sM)_0$ are the $\sV$-natural transformations. Similar statements hold with $\ul{G}$ replaced by any small $\sV$-category $\sD$.

Using the Hopf structure on $\ul{G}$, we define diagonal actions on products $V \otimes W$ and tensors $V \odot M$ and conjugation actions on homs $\ul{\sM}_G(X,Y)$ and cotensors $F_G(V,Y)$, as in \S\ref{subsec:Hopfactions}. The following lemma indicates that the fixed points of conjugation actions are as one would expect.

\begin{lem}\mylabel{lem:eqlem}For any $V \in G\sV$ and $X,Y \in G\sM$, there are natural isomorphisms
	\[
	\ul{\sM}_G(X,Y)^G \cong \ul{G\sM}(X,Y) \quad\text{and}\quad F_G(V,Y)^G \cong GF(V,Y).
	\]
\end{lem}
\begin{proof} The objects on both sides of these isomorphisms are equalizers, but the pairs of maps being equalized are distinct. However, one may use various adjunctions to show that the same maps equalize both pairs.
\end{proof}

It is also easy to check that the tensor, hom, and cotensor adjunctions between $\sM$ and $\sV$ lift to analogous adjunctions between diagonal and conjugation actions.

\begin{lem}\mylabel{lem:teniso}For any $V \in G\sV$ and $X,Y \in  G\sM$, there are natural isomorphisms
	\[
	\ul{\sM}_G(V \odot X,Y) \cong \ul{\sV}_G(V, \ul{\sM}_G(X,Y)) \cong \ul{\sM}_G(X, F_G(V,Y)).
	\]
\end{lem}
\begin{proof}This follows from the coassociativity of $\psi$ and the fact that $\chi$ is a homomorphism of cocommutative comonoids.
\end{proof}

Applying the functor $(-)^G : G\sV \rtarr \sV$ gives the following result.

\begin{prop}\mylabel{prop:teniso}For any $V \in G\sV$ and $X,Y \in  G\sM$, there are natural $\sV$-isomorphisms
	\[
	\ul{G\sM}(V \odot X,Y) \cong \ul{G\sV}(V, \ul{\sM}_G(X,Y)) \cong \ul{G\sM}(X, F_G(V,Y))
	\]
and set bijections
	\[
	{G\sM}(V \odot X,Y) \cong {G\sV}(V, \ul{\sM}_G(X,Y)) \cong {G\sM}(X, F_G(V,Y)).
	\]
\end{prop}

Now specialize to the case $\sM = \sV$. Then $\odot = \otimes$ and we take $I = G/G$.  We deduce that $G\sV$ is a cosmos.

\begin{thm} The data $( (G\sV)_0 , \otimes , I)$ specifies a cosmos structure on the underlying category of $G\sV$.
\end{thm}
\begin{proof}The symmetric monoidal structure on $\sV$, combined with the cocommutative comonoid structure on $G$, gives rise to a symmetric monoidal structure on $(G\sV)_0$ with the same coherence data as $\sV$. The preceding proposition shows that $\ul{\sV}_G(V,W)$ is the internal hom. The unenriched bicompleteness of $(G\sV)_0$ follows from the $\sV$-bicompleteness of $G\sV$.
\end{proof}

Thus it makes sense to enrich over $G\sV$. In particular, $G\sV$ is enriched over itself using the internal homs $\ul{\sV}_G(V,W)$, and we write 
$\ul{\sV}_G$ for this $G\sV$-enrichment. More generally, we may enrich $(G\sM)_0$ over $G\sV$ using the internal homs $\ul{\sM}_G(X,Y)$.

\begin{thm} \mylabel{Menrich} For any $\sV$-bicomplete $\sV$-category $\sM$, the underlying category $(G\sM)_0$ is enriched over $G\sV$. Its hom objects are the conjugation actions $\ul{\sM}_G(X,Y)$, and we write
$\ul{\sM}_G$ for this enrichment. The category $\ul{\sM}_G$ is $G\sV$-bicomplete.
\end{thm}

\begin{proof}  The construction of a $\sW$-enrichment from a tensoring over $\sW$, such as we have here, is essentially folklore, but see \cite{JK} for a systematic discussion. A more general formulation of this result is given in \cite[Theorem 3.6]{BLV}.
\end{proof}

\subsection {Relationships between the enrichments} 

The category $G\sM = \textbf{Fun}(\ul{G},\sM)$ is enriched in two ways: it has a $\sV$-enrichment $\ul{G\sM}$ by standard enriched category theory, and it has a $G\sV$-enrichment $\ul{\sM}_G$ coming from the Hopf structure on $G$. We recover $G\sM$ from $\ul{G\sM}$ and $\ul{\sM}_G$ by taking underlying sets and $G$-fixed points, respectively.

As expected, we have $\ul{\sM}_G^G \cong \ul{G\sM}$.  That is, the $\sV$-enrichment of $G\sM$ is obtained by taking the $G$-fixed points of the $G\sV$-enrichment: this is precisely \myref{lem:eqlem}. However, we can make a further compatibility statement. There are $\sV$-isomorphisms
	\[
	\ul{G\sM}(X,Y) \cong \ul{\sM}_G(X,Y)^G \cong \ul{G\sV}(I,\ul{\sM}_G(X,Y))
	\]
and hence set bijections
	\[
	G\sV(I,\ul{\sM}_G(X,Y)) \cong G\sM(X,Y) \cong \sV(I, \ul{G\sV}(I,\ul{\sM}_G(X,Y)) ).
	\]
Thus, the construction of $G\sM = (\ul{\sM}_G)_0$ from $\ul{\sM}_G$ factors into two steps: first we apply $(-)^G$ to remove the $G$-action, and then we take $\sV(I,-)$ to remove the $\sV$-structure.
Here we are writing $I$ for both the unit object of $\sV$ and, with trivial action by $G$, the unit object of $G\sV$.

Our experience with $G$-spaces indicates that we may sometimes reverse the order of these operations.   That is, we may recover the set of all $G$-maps $f : X \rtarr Y$ by first ignoring the topology on $\ul{\textbf{Top}}_G$, and then taking the fixed points of the resulting $G$-set. This is not always possible in our present setting, but it is if either
	\begin{enumerate}[(i)]
		\item the enriching category $\sV$ is cartesian closed with with terminal object $*$, and the functor $\sV(*,-) : \sV \to \mathbf{Set}$ is faithful, or
		\item $G$ is discrete, i.e. the image of a group in $\mathbf{Set}$ under the functor $\bI[-] : \mathbf{Set} \to \sV$.
	\end{enumerate}
In the first case, the faithfulness of $\sV(*,-)$ implies that one can check naturality on underlying categories. In the second case, one uses the adjunction $(\bI[-],\sV(*,-))$.

\subsection {The basic adjunction $(\bT,\bU)$}\label{TU}
Suppose $\sF$ is a set of subgroups of $G$ that contains $e$. We describe two different, but equivalent, constructions of the adjunction 
	\[
	\xymatrix@1{\mathbf{Fun} ( \sO^{op}_{\sF},\sM)\ar@<.4ex>[r]^-{\bT} &G \sM \ar@<.4ex>[l]^-{\bU}}
	\]
 considered in \myref{GObjAsPrshvs}. Our second description is quite similar to the construction considered in \cite{GM}, but its equivalence with the first accounts for the usual description of $\bT \colon \mathbf{Fun}(\sO^{op}_\sF,\mathbf{Top}) \xymatrix@1 { \ar@<.4ex>[r] & \ar@<.4ex>[l] } G\mathbf{Top} \colon \bU$ in terms of restriction and fixed points.

Let $\ul{G}$ be a Hopf $\sV$-group $G$ regarded as a $\sV$-category with a single object $*$. We have the Yoneda embedding $\bY : \sC \rtarr \mathbf{Fun}(\sC,\sV)^{op}$ that sends $C \in \sC$ to the   represented functor $\sC(C,-) : \sC \rtarr \sV$. Specializing to the case $\sC = \ul{G}$ yields a $\sV$-functor $\bY : \ul{G} \rtarr G\sV^{op}$ that sends $* \in \ul{G}$ to $\ul{G}(*,-)$. The Yoneda lemma implies that represented functors are free, hence we may identify $\ul{G}(*,-)$ with $G/e$. It follows that $\bY$ factors through the orbit category $\sO^{op}_\sF$, and we obtain a $\sV$-adjunction
	\[
	\bT := \bY^* \colon \xymatrix@1 {\mathbf{Fun} ( \sO^{op}_{\sF},\sM)\ar@<.4ex>[r] & \mathbf{Fun}(\ul{G},\sM) = G \sM \ar@<.4ex>[l]} \colon \text{Ran}_{\bY} = \colon \bU.
	\]
The left adjoint $\bY^*$ restricts a presheaf $P : \sO^{op}_\sF \rtarr \sM$ to the $G$-action on $P(G/e)$, and by abstract nonsense the right adjoint is defined by the equalizer
	\[
	(\text{Ran}_\bY M)(G/H) = \text{eq}\Big( \xymatrix@1 {F(G/H,M) \ar@<.4ex>[r] \ar@<-.4ex>[r] & F(G,F(G/H,M)) } \Big) \cong M^H;
	\]
that is, it sends $M \in G\sM$ to its fixed point presheaf.

Alternatively, we may construct $(\bT,\bU)$ using the techniques in \cite{GM}, taking the functor $\delta : \sO_\sF \to G\sV$ there to be the inclusion. The right adjoints are visibly equal, hence the left adjoints also coincide.

In general,} given ``tensor'' and ``hom'' $\sV$-functors
	\[
		\boxtimes \colon \sP \otimes \sM \rtarr \sN	\quad\text{and}\quad \sM	\ltarr \sP^{op} \otimes \sN \colon \t{hom}
	\]
satisfying a $\sV$-adjunction $\ul{\sN}(P \boxtimes M , N) \cong \ul{\sM}(M , \t{hom}(P,N))$, and a small test diagram $\delta : \sD \rtarr \sP$, we may construct a $(\bT , \bU)$ adjunction by taking
	\[
	\bT := \delta \underset{\sD^{op}}{\boxtimes} (-) \colon \xymatrix@1 { \mathbf{Fun}(\sD^{op},\sM) \ar@<.4ex>[r] & \sN \ar@<.4ex>[l]} \colon \t{hom}(\delta^{op},-) =: \bU.
	\]
In this paper, the $(\bT , \bU)$ adjunction is obtained from
	\[
	\odot \colon G\ul{\sV} \otimes \sM \rtarr G\sM	\quad\text{and}\quad \sM \ltarr G\ul{\sV}^{op} \otimes G\sM \colon GF(\bullet,-)
	\]
and $\delta \colon  \sO_\sF \rtarr G\ul{\sV}$. In \cite{GM}, the $(\bT , \bU)$ adjunction is obtained from
	\[
	\odot \colon \sM \otimes \sV \rtarr \sM	\quad\text{and}\quad \sV \ltarr \sM^{op} \otimes \sM \colon \sM(\bullet,-)
	\]
and a test diagram $\delta : \sD \rtarr \sM$.  In \S \ref{GVModel} we define the adjunction 
$$\bT \colon \mathbf{Fun}(\sF\sD^{op},\sV) \xymatrix@1 { \ar@<.4ex>[r] & \ar@<.4ex>[l] } G\sM \colon \bU$$ 
by taking $\delta$ to be the inclusion $\sF\sD \rtarr G\sM$. Thus the tensor-hom pairings being considered are different, and the test objects are drawn from different categories. Here they are usually objects of $G\sV$, but in \cite{GM}, they are objects of $\sM$.

\end{document}